\numberwithin{equation}{section}
\newtheorem{teo}{Theorem}[section]
\newtheorem{prop}{Proposition}[section]
\newtheorem{lem}{Lemma}[section]
\newtheorem{defn}{Definition}[section]
\newtheorem{obs}{Remark}[section]
\newtheorem{cor}{Corollary}[section]
\def\bbR{{\mathbb R}}
\def\bbN{{\mathbb N}}
\def\bbE{{\mathbb E}}
\def\bbP{{\mathbb P}}
\def\bbR{{\mathbb R}}
\def\bbM{{\mathcal M}}
\def\bbF{{\mathcal F}}
\def\bbG{{\mathcal G}}
\def\bbT{{\mathcal T}}
\title{On the continuous-time limit of the Barab{\'a}si--Albert random graph}
\author{Angelica Pachon$^{1}$, Federico Polito$^{2}$ \& Laura Sacerdote$^{2}$\\
	\footnotesize{${}^1$Faculty of Computing, Engineering and Science, University of South Wales, UK}\\
	\footnotesize{${}^2$Mathematics Department ``G.~Peano'', University of Torino, Italy}}
\begin{document}

	\maketitle

	\abstract

		We prove that, via an appropriate scaling,
		the degree of a fixed vertex in the Barab\'asi--Albert model appeared at a large enough time converges in
		distribution to a Yule process. 
		Using this relation we explain why the limit degree distribution of a vertex chosen uniformly at random
		(as the number of vertices goes to infinity),
		coincides with the limit distribution of the number of species in a genus selected uniformly
		at random in a Yule model (as time goes to infinity).
		To prove this result we do not assume that the number
		of vertices increases exponentially over time (linear rates).
		On the contrary, we retain their natural growth with a constant rate
		superimposing to the overall graph structure a suitable set of processes that we call the \emph{planted model} and
		introducing an ad-hoc sampling procedure.
		
		\medskip
		
		\noindent \textit{Keywords}: Barab\'asi--Albert model; Preferential attachment random graphs;
			Planted model; Discrete- and continuous-time models; Yule model.

		\medskip		
		
		\noindent \textit{MSC2010}: 05C80, 90B15, 60J80.

	\section{Introduction}
	
		One of the most popular models for network growth is the preferential attachment model proposed by
		Barab\'asi and Albert \cite{Barabasi1999} to describe the web graph growth.
		In this model a newly created vertex is connected to one of
		those already present in the graph with a probability proportional to their degrees.
		An important characteristic related to a non-degenerate
		preferential attachment growth mechanism is the presence of a power-law distribution for the asymptotic degree of
		a vertex selected uniformly	at random.
		This property is actually observed on World Wide Web data
		\cite{Barabasi1999,Faloutsos:1999,Kleinberg:1999}.
		Furthermore, power-law distributions also occur frequently in other real-world phenomena and many of them are strictly related
		to the preferential attachment paradigm	\cite{2009SIAMR,342,343,2011Natur,Newmannew}. This fact
		determines an increasing interest on the Barab\'asi--Albert model (BA model in the following) and for random graphs
		growing with preferential attachment rules in general.
		Indeed, there is an already extensive literature analyzing this class of random graphs. See for recent references Chapter 8 in \cite{hofstad2016} and the papers cited therein; we also recall \cite{PhysRevLett.85.4633,redner,PhysRevLett.85.4629}. The typical techniques considered, also implemented in the latter papers, are mainly of combinatorial type and based on the analysis of the expectation of specific functions of the degree or in-degree,
		together with concentration inequalities \cite{Bollobas2001,CooperFrieze,Remco92}. Other methods involve continuum and discrete approaches
		to study large but finite growing graphs \cite{redner2} and an embedding of the random graph processes into a continuous setting
		involving a sequence of pure birth continuous Markov chains (see \cite{Athreya,RSA:RSA20137,Bhamidi}). The technique of embedding
		a discrete sequence of random variables in continuous time processes is known for almost fifty years. When it is used  on
		random graph processes,  asymptotic results about properties of the vertices are obtained through an efficient use of branching
		process methods (see e.g. \cite{RSA:RSA20137,Bhamidi}).
		Despite its generality, the application of this technique  is not straightforward when the considered graph
		corresponds to the Barab\'asi--Albert model whose growth allows the simultaneous birth of $m\geq1$ new links. To deal with this problem
		it would be necessary either to develop suitable ``ad-hoc'' coupling techniques or merge vertices.
		For this last procedure see \cite{Bhamidi} for preferential attachment networks. 
		In continuous time there are other well-known probabilistic models which are clearly related to preferential attachment random graphs. Among them the Yule model for
		macroevolution \cite{Yule1925}.
		The relationship linking the Yule model with some discrete-time preferential attachment models is not
		straightforward \cite{PachonPolitoSacerdote} but can be exploited to effectively study discrete-time preferential attachment random graphs. In \cite{PachonPolitoSacerdote} we showed how a specific discrete-time model with preferential attachment, the Simon model \cite{SIMON1955}, is related
		(in a sense of weak convergence) to a set of Yule models. 
		
		The aim of this paper is to analyze specific aspects of the limiting behaviour of the BA model. To pursue this goal we follow and improve the methodology of \cite{PachonPolitoSacerdote} and relate the BA and the Yule models. Specifically, we couple the degree growth process of a vertex to a set of Markov processes,
		and we introduce the planted tree, an auxiliary branching structure superimposed to the random graph.		
		Then, by means of this, we establish that the degree of a vertex chosen uniformly at random converges in distribution
		to the size of a uniformly chosen genus in a $m$-Yule model (a Yule model characterized by an infinite sequence of independent Yule processes, each starting with $m$ individuals).
		We underline that we do not describe the dynamics of the degree of fixed vertices and the dynamics of the growth
		of the number of vertices with a given degree at the same time. Instead, we create a separate mechanism to describe the dynamics
		of the growth of the number of vertices with a given degree
		that does not need the Markov property of the degree processes. 
		
		We underline how this approach may prove to be useful in other
		cases as well, for example when the embedding method does not apply.
		Embedding techniques are problematic for more general preferential attachment models which are non-Markov,
		i.e., in which the emergence of future connections to existing vertices does not depend solely on the present state of their degree
		(for instance the connections could be affected by time delays of
		the random intervals at which the degree of a vertex changes, see \cite{BogunaSerrano} and the references therein).
		Other cases in which the approach of \cite{PachonPolitoSacerdote} and this paper might be applied are models with more general
		preferential attachment functions, such as models involving individual fitness \cite{Borgs:2007:FME:1250790.1250812} and/or
		aging \cite{GaravagliaHofstad}, but also models in which hybrid rules are considered such as the uniform/preferential attachment
		\cite{yang}.
				
		The	paper is organized as follows. The BA, the Yule and the $m$-Yule models are described in Section \ref{definizioni}.  
		In Section \ref{mainresults}, the main results are presented (the related proofs are contained Section \ref{proofs})
		together with a heuristic motivation of their validity.
		Summarizing the main results briefly, Theorem \ref{Teo1a} shows
		that when  infinitely many vertices have already appeared in the BA model, the degree
		distribution of a vertex appearing subsequently coincides with the distribution of the number
		of individuals in a Yule process starting with $m$ initial individuals. Note that this result is consistent with the known
		related result in the case of preferential attachment trees, that is when $m=1$ (see for example \cite{RSA:RSA20137}).
		Theorem \ref{Teo1} proves the convergence to the same limit distribution of the degree of a vertex chosen uniformly at random
		in the BA model (when the number of vertices diverges) and of the size of a genus chosen uniformly at random in an $m$-Yule
		model when time goes to infinity. We also prove that in the BA model the proportion of vertices with a given degree $k$
		converges in probability (as the number of vertices diverges) to the probability that the degree of a vertex chosen
		uniformly at random is equal to $k$.   
		The exact form of the limit distribution of Theorem \ref{Teo1} is then given in Proposition \ref{Teo2}.
		Furthermore, the above results can be extended to preferential attachment random graphs for which Lemma \ref{lema1} holds. This is mentioned in Remarks \ref{cere} and \ref{independentModel}.
		In Section  \ref{monotone}, a method for the sampling procedure of a random vertex in a general random graph model
		is proposed together with the notion of planted model.
		This method is a key tool to prove our main results.  More specifically, 
		this procedure is used to prove the relation
		between a randomly selected vertex in the BA model and a genus
		chosen uniformly at random from one of the $m$-Yule models which in turn is chosen uniformly at random from the set of all $m$-Yule models present in the planted model.
		Finally, as recalled above, Section \ref{proofs} contains the proofs of the above-mentioned results and the necessary
		auxiliary lemmas.

	\section{Preliminaries}\label{definizioni}

		\subsection{The Barab\'asi-Albert model}\label{BAl}

			In \cite{Barabasi1999}, the preferential attachment paradigm was proposed for the first time to model the growth of the World Wide Web.
			To do so the authors introduced a random graph model in which the vertices were added to the graph one at a time and joined to a
			fixed number of existing vertices, selected with probability proportional to their degree. In such a model the vertices
			represented the web pages and the edges their links. 
			In \cite{Barabasi1999} the model is described as follows:
					
			\begin{quote}
				\textit{Starting with a small number $(m_0)$ of vertices,
				at every time step  add a new vertex with $m$ $(\leq m_0)$
				edges that link the new vertex to $m$ different vertices already present in the system.
				To incorporate preferential attachment,  assume that the probability  that a new vertex will be connected
				to a vertex $i$ depends on the connectivity $k_i$ of that vertex, so it would be equal to $k_i/\sum_{j}k_j$.
				Thus, after $t$ steps the model leads to a random network with $t+m_0$ vertices and $mt$ edges.} 
			\end{quote}

			The model was then defined in rigorous mathematical terms by Bollob\'as et al.\
			\cite{Bollobas2001}. However, in this paper we follow a large part of the literature in referring to the above model as the
			Barab\'asi--Albert model even though it should be more correctly named after the authors of \cite{Bollobas2001}.
			Here we recall their definition for the growth of the random graph process $(G_m^t)_{t\geq 1}$.
			\begin{defn}\label{BAdefinition}
				For each $m\geq 1$ and for  every $n\in \bbN$,
				the process  $(G_m^t)_{t\geq 1}$  is such that,
				\begin{enumerate}
					\item  at time $t=n(m+1)+1$   a new vertex $v_{n+1}$ is added; 
					\item  for $i=2,\dots,m+1$, at each time $t=n(m+1)+i$ an edge from $v_{n+1}$ to $v$ is added
						with $v$ chosen with the following probabilities:
						\begin{align}
							\label{BA}
							\mathbb{P}(v_{n+1} \longrightarrow v) =
							\begin{cases}
								\dfrac{d(v,t-1)}{2(mn+i-1)-1}, &  v\neq v_{n+1}, \\ \\
								\dfrac{d(v,t-1)+1}{2(mn+i-1)-1}, & v=v_{n+1}.
							\end{cases}
						\end{align}
				\end{enumerate}
				In \eqref{BA} $d(v,t)$ denotes the degree of the vertex $v$ in $G_m^t$. 
			\end{defn}
			We explicitly underline that $(G_m^t)_{t\geq 1}$ starts
			at time $t=1$  with a single vertex, $v_1$,
			without loops. However, since at time $t=2$ the only existing vertex is $v_1$, then a loop is produced.

		\subsection{The Yule model}
				
			In this section we recall a classical continuous-time stochastic process which will be proven in the following to be strictly related to the BA random graph described in the preceding section. To avoid misunderstandings, we denote here by
			$T\in\mathbb{R}^+$ the continuous-time variable, while $t\in\bbN^* = \{1,2,\cdots\}$ indicates the discrete time.
			
			The model we are concerned with was introduced in 1925 by Yule \cite{Yule1925} to describe the macroevolution of a population characterized by the presence of different genera and species belonging to them.
			In order to describe it we first recall the well-known definition of a Yule process,
			i.e.\ a linear pure birth process in continuous time. A Yule model will then be defined in terms of a collection
			of independent Yule processes of possibly different birth intensities.
			
			\begin{defn}
				A Yule process $\{N(T)\}_{T\geq0}$ is a counting process in continuous time with state space $\bbN^*$,
				having initial condition $N(0)=g$, $g\geq1$, almost surely and
				infinitesimal transition probabilities
				\begin{align}\label{BirtPro}
					\bbP(N(T+h)=k+\ell\mid N(T)=k)=
					\begin{cases}
						k\lambda h+o(h), & \ell=1,\\
						o(h), & \ell>1,\\
						1-k\lambda h+o(h), & \ell=0,
					\end{cases}
				\end{align}
				where $\lambda>0$ is the birth intensity and $h>0$.
			\end{defn}  
			
			This process describes the growth of the size of a population in which, during any short time interval of length $h$ each member
			has probability $\lambda h+o(h)$, independently one another, to create a new individual.
			Note that the probability of simultaneous births is $o(h)$.
			
			Yule \cite{Yule1925} proposes to use independent copies of this process
			to model the growth of the number of species
			belonging to each separate genus.
			In turn, the evolution of the appearing genera is modelled by a further Yule process characterized by a possibly different birth intensity, say $\beta$, and
			independent of the former.
			The stochastic process determined by the combination of these two types of Yule processes is now known as a Yule model:
			\begin{defn}
				A Yule model describes the growth of the number of genera and species according to the following rules: 
				\begin{enumerate}
					\item genera (each comprising a single species) appear as a Yule process $\{N_{\beta}(T)\}_{T\geq0}$ of parameter $\beta$
					with one genus at time $T=0$ almost surely;
					\item each time a new genus appears, a copy of a Yule process of parameter $\lambda$  with
					a single initial progenitor starts. Those copies are independent one another and of the process of appearance
					of genera. Each copy models the evolution of species belonging to the same genus.
				\end{enumerate}  
			\end{defn}      

			In this paper we also consider an $m$-Yule model (denoted by $\{Y^m_{\lambda,\beta}(T)\}_{T\geq0}$),
			that is a process similar to a classical Yule model but in which
			the birth processes describing the evolution of the species belonging to each genus start from $m\in \bbN^*$ initial species
			almost surely. To underline the initial condition we will add a superscript $m$ to the Yule process counting the number
			of species for each genus: $\{N_{\lambda}^m(T)\}_{T\geq 0}$.
			We explicitly remark that the letter $m$, used for the initial value of the $m$-Yule model was already used to indicate
			the number of edges from a vertex in the BA model. This choice is not a coincidence, in the next sections we will
			in fact show that, as soon as we create a correspondance between the two models, the initial value $N_{\lambda}^m(0)$, is determined by the parameter $m$ of the BA model.
			Finally, we would like to point out that the $1$-Yule model coincides with the original Yule model of \cite{Yule1925}.

	\section{Main results}\label{mainresults}
		In order to better introduce our main results, let us first describe a heuristic approach explaining the relation between the discrete time process for the degree growth of a fixed vertex and a Yule process.  		
		In the BA model, $m$ directed edges sequentially connect each new vertex to the others
		with probabilities proportional to the degrees of the existing vertices. Thus, at the time at which
		there are $n$ vertices, that is at time $t=n(m+1)$, we have $mn$ directed edges, and 
		by the preferential attachment rule we have approximately
		\begin{align}\label{approx1}
			\bbP[d(v,(n+1)(m+1))=k+1\mid d(v,n(m+1))=k]\approx \frac{km}{2mn}=\frac{k}{2n},
		\end{align}
		where $d(v,t)$ is the degree of $v$ in the BA model. 
		The approximation done in \eqref{approx1} consists in connecting all the $m$ edges simultaneously instead of sequentially,
		that is, we consider $m$ chances of increasing the degree of $v$ from $k$ to $k+1$.
		Furthermore, we neglect the increase of the number of vertices during the random time interval between the instants at which the degree of $v$ changes from $k$ to $k+1$. By formula (\ref{approx1}) the distribution of this random time interval
		is geometric with parameter $k/(2n)$.  In this approximations, when $n\rightarrow\infty$ we obtain a convergence
		to an exponential random variable of parameter $k\lambda$, with $\lambda=1/2$. 
		Moreover, neglecting also the possibility of loops, the initial degree of $v_i$, $i\geq n$, turns out to be equal to $m$.
		These two observations suggest for large values of $n$ to approximate the distribution of the degree of a vertex in the BA model by the distribution of the number of individuals in a Yule process  with parameter $\lambda=1/2$ and initial condition $N_{\lambda}(0)=m$.
		In Theorem \ref{Teo1a}, below, we make rigorous the above heuristics by proving that the process describing the degree of a fixed vertex in the BA model converges in distribution to the number of individuals in a Yule process with initial size $m$.
		Further, much interest is towards the study of the asymptotic degree of a vertex chosen uniformly at random.
		In Theorem \ref{Teo1} we show that the BA model is related
		to a sequence of suitably scaled $m$-Yule models. Exploiting  this relation we prove that  the asymptotic degree distribution of a vertex	chosen uniformly at random in the BA model  coincides with the asymptotic  distribution of the size  of a genus
		chosen uniformly at random in the  $m$-Yule model.	
		\begin{teo}
			\label{Teo1a}
			Let $z(i,w):\bbN^*\times\bbR^+\rightarrow\bbN$, $w\in\bbR^+$ be a function such that
			$c(w):=\lim_{i\rightarrow\infty}z(i,w)/i$ exists finite, with $c(w):\bbR^+\rightarrow \bbR^+$ increasing
			in $w$. Let $b\geq 1$ and $w_1<w_2<\dots<w_b$ be positive real numbers. We have that 
			\begin{align}
				\lim_{i\rightarrow\infty} & \bbP\left[d\left(v_i,(i+z(i,w_1))(m+1)\right)=k_1,\dots,d\left(v_i,(i+z(i,w_b))(m+1)\right)=k_b\right]\\
				&=\bbP[N_{1/2}^m(\log(1+c(w_1)))=k_1,\dots,N_{1/2}^m(\log(1+c(w_b)))=k_b]\nonumber\\           
				&=
				\prod_{\ell=1}^b \binom{k_{\ell}-1}{k_{\ell}-k_{\ell-1}}e^{-\frac{k_{\ell-1}}{2}\log\bigl(\frac{1+c(w_\ell)}{1+c(w_{\ell-1})}\bigr)}
				\Big(1-e^{-\frac{1}{2}\log\bigl(\frac{1+c(w_\ell)}{1+c(w_{\ell-1})}\bigr)}\Big)^{k_{\ell}-k_{\ell-1}}. \notag
			\end{align}
			Here $w_0=0$, $k_0=m$, and $m\le k_1 \le \dots \le k_b\in\bbN^*$.
		\end{teo}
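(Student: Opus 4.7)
The plan is to view the convergence as the composition of two pieces: a finite-dimensional convergence of the discrete degree process to an $m$-initialized Yule process under a logarithmic time change, followed by an explicit computation of the joint law of the limit through its Markov structure.

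I would first justify the logarithmic time change. In view of the heuristic in \eqref{approx1}, the rate at which $d(v_i,n(m+1))$ moves from $k$ to $k+1$ when $n$ is incremented is, to first order, $k/(2n)$, so the compensator accumulated between discrete times $i$ and $i+z$ is
\begin{equation*}
	\sum_{n=i}^{i+z}\frac{1}{2n}=\frac{1}{2}\ln\!\Bigl(1+\frac{z}{i}\Bigr)+o(1).
\end{equation*}
Under the hypothesis $z(i,w)/i\to c(w)$, the natural continuous time at which to evaluate the limiting Yule process is therefore $T_i(w)=\ln(1+z(i,w)/i)\to \ln(1+c(w))$, which is precisely the time argument appearing on the right-hand side of the statement.

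Next I would invoke Lemma \ref{lema1conv}, whose proof combines the one-step transition bounds of Lemma \ref{lema1} with the Markovian sandwich of Lemma \ref{teo2.1} and Corollary \ref{cor2}. Because $w_1<\dots<w_b$ and $c$ is increasing, the target continuous times $\ln(1+c(w_\ell))$ are monotone and the finite-dimensional convergence applies at the $b$ discrete time points $n_\ell=(i+z(i,w_\ell))(m+1)$, giving
\begin{align*}
&\lim_{i\to\infty}\bbP\bigl[d(v_i,n_1)=k_1,\dots,d(v_i,n_b)=k_b\bigr] \\
&\qquad = \bbP\bigl[N_{1/2}^m(\ln(1+c(w_1)))=k_1,\dots,N_{1/2}^m(\ln(1+c(w_b)))=k_b\bigr].
\end{align*}
To reduce this Yule joint probability to the stated product, I would apply the Markov property of $N_{1/2}^m$ together with its classical negative-binomial one-step transition: starting from $k_{\ell-1}$ individuals, the probability of being in state $k_\ell$ after a time $\Delta_\ell$ equals $\binom{k_\ell-1}{k_\ell-k_{\ell-1}}e^{-k_{\ell-1}\Delta_\ell/2}(1-e^{-\Delta_\ell/2})^{k_\ell-k_{\ell-1}}$. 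Substituting $\Delta_\ell=\ln((1+c(w_\ell))/(1+c(w_{\ell-1})))$, with the conventions $w_0=0$, $c(w_0)=0$ and $k_0=m$, and multiplying over $\ell$ yields the displayed product; the implicit constraint $m\le k_1\le\dots\le k_b$ is automatically enforced because the binomial coefficients vanish outside this regime.

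The main obstacle is the coupling step encoded in Lemma \ref{teo2.1} and Corollary \ref{cor2}: the true degree process is not time-homogeneous in $n$, the $m$ sequential sub-attachments inside a single block produce $O(1/n^2)$ corrections to the naive one-step rate $k/(2n)$, and these corrections must be controlled uniformly while the sandwich is propagated over $z(i,w_b)\sim c(w_b)\,i$ consecutive steps and at all $b$ sampling times simultaneously. Once the sandwiching Markov pair is shown to converge, under the logarithmic scale, to the common Yule limit and a standard squeeze argument is applied, the transfer to the joint distribution and the final explicit formula become routine manipulations of the Yule transition kernel.
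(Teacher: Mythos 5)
Your proposal is correct and follows essentially the same route as the paper: the finite-dimensional convergence is obtained from the transition-probability bounds of Lemma \ref{lema1} together with the Markovian sandwich of Lemma \ref{teo2.1} and Corollary \ref{cor2}, packaged as Lemma \ref{lema1conv} under the logarithmic time change $T_i(w)\to\ln(1+c(w))$, and the explicit product is then read off from the negative-binomial transition law of the Yule process applied along the increments $\Delta_\ell=\ln\bigl((1+c(w_\ell))/(1+c(w_{\ell-1}))\bigr)$. Your phrasing via the Markov property of $N_{1/2}^m$ is in fact the cleaner way to state what the paper calls ``independence of the increments,'' and you correctly flag the uniform control of the $O(1/n^2)$ corrections over $\sim c(w_b)\,i$ steps as the substantive technical point.
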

		\begin{obs}
			Notice that, for $\ell=1,\dots,b$, the required time change $i \mapsto i+z(i,w_\ell)$ behaves
			asymptotically as the linear function $i \mapsto i + ic(w_\ell)$.
			Moreover, the logarithm of its slope, $1 + c(w_\ell)$, is the time at which the Yule process is evaluated. Regarding the existence
			of the function $z(i,w_\ell)$, possible choices can be $z(i,w_\ell)=\lfloor iw_\ell\rfloor$ or $z(i,w_\ell)=\lfloor (i-1)w_\ell\rfloor$.
		\end{obs}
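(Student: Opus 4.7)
The remark makes three elementary claims about the time-scaling function $i \mapsto i + z(i, w_\ell)$ under the hypotheses of Theorem \ref{Teo1a}, and the plan is to verify each in turn using only the defining property $c(w) = \lim_{i \to \infty} z(i, w)/i$, together with standard estimates on the floor function.

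First, to establish the asymptotic linearity, I would factor out $i$ and observe that
\begin{equation*}
    \frac{i + z(i, w_\ell)}{i} = 1 + \frac{z(i, w_\ell)}{i} \xrightarrow[i \to \infty]{} 1 + c(w_\ell),
\end{equation*}
which is exactly the assertion that $i + z(i, w_\ell) \sim i\bigl(1 + c(w_\ell)\bigr)$ as $i \to \infty$. The slope of the asymptotically equivalent linear function is therefore $1 + c(w_\ell)$. The interpretation of $\ln(1 + c(w_\ell))$ as the time at which the Yule process is evaluated is then simply obtained by comparing with the conclusion of Theorem \ref{Teo1a}: the argument of $N_{1/2}^m$ there is precisely the logarithm of the computed slope. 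This is the natural manifestation of the exponential time change underlying the embedding, already anticipated heuristically just before the statement of the theorem.

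Second, for the two concrete candidates, I would verify directly that each falls within the admissible class. For $z(i,w) = \lfloor i w \rfloor$, the standard bound $iw - 1 < \lfloor i w \rfloor \le i w$ gives, upon division by $i$ and a squeeze, $\lfloor i w \rfloor / i \to w$, hence $c(w) = w$, which is positive, finite and strictly increasing in $w$. For $z(i,w) = \lfloor (i-1)w \rfloor$, the analogous estimate $(i-1)w - 1 < \lfloor (i-1)w \rfloor \le (i-1)w$ combined with $(i-1)/i \to 1$ again yields $\lfloor (i-1)w \rfloor / i \to w$, so $c(w) = w$ as before. In both cases the time at which the limiting Yule process is evaluated is therefore $\ln(1 + w_\ell)$.

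The main obstacle here is essentially nonexistent: every step follows immediately from the hypotheses of Theorem \ref{Teo1a} together with the standard squeeze bounds on the floor function, with no new machinery required. The remark functions as interpretive commentary on Theorem \ref{Teo1a}, clarifying the quantitative meaning of the admissible scalings, so the proof amounts to no more than a careful unpacking of the asymptotic and a direct check on the two sample functions.
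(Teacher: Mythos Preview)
Your verification is correct and complete. The paper itself offers no proof for this remark: it is stated purely as interpretive commentary on Theorem \ref{Teo1a}, with the asymptotic linearity, the identification of $\ln(1+c(w_\ell))$ as the Yule time, and the admissibility of the two floor-function examples all treated as self-evident. Your explicit unpacking via the defining limit $c(w)=\lim_{i\to\infty} z(i,w)/i$ and the squeeze on $\lfloor\cdot\rfloor$ is exactly the elementary justification the remark invites but does not spell out, so there is nothing further to compare.
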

		\begin{obs}
			Theorem \ref{Teo1a} states that the joint distribution of the degrees of $v_i$
			at times $z(i, w_1)(m + 1),\dots,z(i, w_b)(m + 1)$ after its first appearance in $(G^t_m)_{t\ge 1}$, converges, as
			$i \to \infty$, to the joint distribution of the number of individuals of a Yule process with $m$ initial
			individuals and parameter $\lambda = 1/2$, evaluated at the times $\log(1 + c(w_1)),\dots, \log(1 + c(w_b))$.
		\end{obs}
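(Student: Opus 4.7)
The plan is to follow the four-step roadmap outlined immediately before the statement. First I would establish two-sided bounds on the one-step transition probabilities of the degree process $\{d(v_i,t)\}_{t \ge i(m+1)}$ induced by \eqref{BA}. Next I would use those bounds to sandwich the (non-Markovian) degree process between two explicit birth-type Markov chains whose per-step kernels depend only on the current state. Third I would show that both bounding chains, evaluated along the time change $n \mapsto (i+z(i,w_\ell))(m+1)$, converge in finite-dimensional distribution to a Yule process $N_{1/2}^m$ evaluated at the times $\ln(1+c(w_\ell))$. Finally I would read off the explicit joint law of $N_{1/2}^m$ from its well-known negative-binomial transition kernel together with the Markov property.

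The heart of the argument is a block computation. Over a block of $m+1$ attachment substeps, from time $n(m+1)+1$ to $(n+1)(m+1)$, the degree of $v_i$ can increase by at most $m$; by \eqref{BA} the conditional probability of gaining exactly one unit, given $d(v_i,n(m+1))=k$, equals $m\cdot k/(2mn)=k/(2n)$ up to $O(1/n^{2})$ corrections coming from (a) the varying denominator $2(mn+j-1)-1$ across the $m+1$ substeps, (b) the possible self-loop at the first substep, and (c) multiple-increment events. Controlling these corrections uniformly in $n$ gives the sharp two-sided bound and identifies the per-state jump rate $k/2$, i.e.\ the generator of a Yule process with $\lambda=1/2$. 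Coupling the resulting block kernels from above and below with geometric-holding-time Markov chains, both starting from $m$, yields the sandwich.

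Once the chain is sandwiched by Markov processes with matching leading-order rates, the finite-dimensional convergence reduces to evaluating, on each interval $[w_{\ell-1},w_\ell]$, the probability of moving from $k_{\ell-1}$ to $k_\ell$. Using the hypothesis $z(i,w)/i\to c(w)$ and comparing a Riemann sum to a logarithm,
\begin{equation*}
\prod_{r=i+z(i,w_{\ell-1})}^{i+z(i,w_\ell)-1}\left(1-\frac{k}{2r}\right)\longrightarrow \exp\!\left(-\frac{k}{2}\ln\frac{1+c(w_\ell)}{1+c(w_{\ell-1})}\right),
\end{equation*}
which is exactly the probability that a rate-$k/2$ Yule process has no event in a real-time window of length $\Delta_\ell:=\ln\bigl((1+c(w_\ell))/(1+c(w_{\ell-1}))\bigr)$. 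Combining this with the Markov property and iterating in $k$ over $k_{\ell-1}\le k<k_\ell$ produces the negative-binomial Yule transition kernel $\binom{k_\ell-1}{k_\ell-k_{\ell-1}}e^{-k_{\ell-1}\Delta_\ell/2}(1-e^{-\Delta_\ell/2})^{k_\ell-k_{\ell-1}}$, and telescoping from $\ell=1$ to $\ell=b$ with $k_0=m$, $w_0=0$ yields the stated product formula.

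The main obstacle I foresee is the sandwich step: the degree process is genuinely non-Markovian within a block, and the numerator of \eqref{BA} switches between $d(v,t-1)$ and $d(v,t-1)+1$ depending on whether a self-loop is drawn. The delicate point is to dominate these per-substep fluctuations by Markovian kernels whose leading order is still $k/(2n)$, and simultaneously to check that the accumulation of the multiplicative $1+O(1/n)$ correction factors across the $O(i)$ blocks involved in the scaling remains $1+o(1)$, uniformly over the finitely many evaluation times $w_1<\dots<w_b$. Once this tightness is in place, the convergence in the third step is essentially the classical convergence of a discrete pure-birth chain with per-step rate $k/(2n)$ to a continuous-time Yule chain under a logarithmic time change, and the final step is a routine identification of the limit law.
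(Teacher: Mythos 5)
Your proposal follows essentially the same route as the paper: two-sided $k/(2n)+O(1/n^2)$ bounds on the block transition probabilities (the paper's Lemma \ref{lema1}), a stochastic-ordering coupling sandwiching the non-Markovian degree process between two Markov chains (Lemma \ref{teo2.1} and Corollary \ref{cor2}), convergence of both bounding chains under the time change $\sum_{n} 1/n \to \ln(1+c(w_\ell))$ to a Yule process with $\lambda=1/2$ and $m$ initial individuals (Lemma \ref{lema1conv}), and identification of the limit law via the negative-binomial Yule transition kernel. The only cosmetic difference is that the paper realizes the limit by embedding the bounding chains as point processes with step lengths $h_n=1/n$ and matching infinitesimal transition probabilities, whereas you compute the holding-time products directly; these are equivalent.
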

		\begin{teo}
			\label{Teo1}
			Consider an $m$-Yule model  $\{Y^m_{1/2,1}(T)\}_{T\geq0}$, and let
			$\mathcal{N}_T^m$ be the size of a genus chosen uniformly at random at time $T$ in  $\{Y^m_{1/2,1}(T)\}_{T\geq0}$.
			Consider the random graph process $({G}_m^t)_{t\geq 1}$  defining the BA model with $N_{k,t}$  vertices with degree $k$.
			Let  $d(V_t)$ be the degree of a vertex chosen uniformly at random at time $t$ in $({G}_m^t)$.
			Then, for $t=n(m+1)$ we have
			\begin{align}
				\label{uniformchoice}
				p_k:=\lim_{n\rightarrow\infty}\bbP(d(V_t)=k)=
				\lim_{T\rightarrow\infty}\bbP(\mathcal{N}_T^m=k), \qquad k \ge m,
			\end{align}
			and for $C>m\sqrt{8}$, 
			\begin{align}
				\label{Inprobability}
				\mathbb{P}\Big(\max_{k}\Big|\frac{N_{k,t}}{n}-\bbP(d(V_t)=k)\Big|\geq C\sqrt\frac{(m+1)\log (n(m+1))}{n}\Big)=o(1).
			\end{align} 
			Furthermore, as $n\rightarrow\infty$, $N_{k,t}/n\rightarrow p_k$ in probability.
		\end{teo}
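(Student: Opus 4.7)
The plan is to handle \eqref{uniformchoice}, \eqref{Inprobability} and the final in-probability limit in this order, relying on the tools already built in the paper.

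For \eqref{uniformchoice}, the natural route is through the sampling of Section \ref{monotone}. At $t=n(m+1)$, Theorem \ref{plantedinduce} tells us that sampling a uniform vertex $V_t$ in $G_m^t$ amounts to fixing some $i\in\{2,\dots,n\}$, choosing an ancestor index $W$ uniformly in $\{1,\dots,i\}$, and then picking a descendant $v_{W,\ell}$ uniformly in the birth process $\{b(v_W,\mathfrak T_n)\}$. Tie $n$ and $i$ through the scaling $n=i+z(i,w)$ of Theorem \ref{Teo1a}; as $i\to\infty$, Lemma \ref{S-Yule2} gives that $b(v_W,\mathfrak T_n)$ converges in law to a rate-$\beta=1$ Yule process with one progenitor, evaluated at time $\ln(1+c(w))$. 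Conditioning on this birth process, the uniform position of $\ell$ corresponds to the (random) appearance-time $S$ of a uniformly chosen individual in the rate-$1$ Yule progeny, and the degree of $v_{W,\ell}$ in $G_m^t$ satisfies the scaling hypotheses of Theorem \ref{Teo1a} with residual parameter corresponding to Yule time $\ln(1+c(w))-S$; hence it converges in distribution to $N^m_{1/2}(\ln(1+c(w))-S)$. Assembling the pieces identifies the limit law of $d(V_t)$ with that of the size of a genus chosen uniformly at random in $\{Y^m_{1/2,1}(T)\}$ at time $T=\ln(1+c(w))$; letting $w\to\infty$ sends $T\to\infty$ and yields both sides of \eqref{uniformchoice}.

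For \eqref{Inprobability} I would use a Doob-martingale concentration. With $t=n(m+1)$, let $\mathcal F_s$ be the $\sigma$-algebra generated by the first $s$ elementary steps of the construction of $G_m^t$ in Section \ref{BAl}, and set $M_s=\bbE[N_{k,t}\mid\mathcal F_s]$. A one-step coupling of two realizations that differ only at step $s$ shows $|M_s-M_{s-1}|\le 2m$, since at most $m$ subsequent attachments are affected and each shifts the count by at most $2$. Azuma--Hoeffding then gives
\begin{align*}
\bbP\bigl(|N_{k,t}-\bbE N_{k,t}|\ge \delta\bigr)\le 2\exp\bigl(-\delta^2/(8m^2 t)\bigr).
\end{align*}
Choosing $\delta=Cn\sqrt{(m+1)\ln(n(m+1))/n}$ turns the exponent into $-C^2\ln(n(m+1))/(8m^2)$, which is strictly less than $-\ln(n(m+1))$ exactly when $C>m\sqrt{8}$. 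A union bound over the at most $n(m+1)$ admissible degrees $k$, together with $\bbE[N_{k,t}]/n=\bbP(d(V_t)=k)$ (from exchangeability of the uniform vertex sampling), then yields \eqref{Inprobability}.

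The last assertion is immediate from the two previous steps: by the triangle inequality
\begin{align*}
|N_{k,t}/n-p_k|\le |N_{k,t}/n-\bbP(d(V_t)=k)|+|\bbP(d(V_t)=k)-p_k|,
\end{align*}
the first term is $o_{\bbP}(1)$ uniformly in $k$ by \eqref{Inprobability}, and the second is $o(1)$ by \eqref{uniformchoice}. The main obstacle is clearly the first step: one must synchronise the two limits $i,n\to\infty$ under the coupling $n=i+z(i,w)$, identify the joint law of (ancestor, age-in-birth-process, residual Yule time, resulting BA degree) with the corresponding quadruple arising in the $m$-Yule model, and justify the interchange of this limit with the conditional expectation inside Theorem \ref{Teo1a}. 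By contrast, the concentration bound and the deterministic closure to obtain convergence in probability are essentially routine.
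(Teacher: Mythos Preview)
Your plan matches the paper's almost step for step: the planted-model sampling of Section~\ref{monotone} together with Theorem~\ref{Teo1a} and Lemma~\ref{S-Yule2} for \eqref{uniformchoice}, and the Doob martingale plus Azuma--Hoeffding with a union bound over the at most $n(m+1)$ relevant values of $k$ for \eqref{Inprobability}. The computation of the Azuma exponent, the identity $\bbE[N_{k,t}]/n=\bbP(d(V_t)=k)$, and the triangle-inequality closure are exactly as in the paper.

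One point the paper makes explicit that you gloss over: the case $\ell=1$, i.e.\ when the uniformly chosen descendant $v_{W,\ell}$ is the planted root $v_W$ itself. Theorem~\ref{Teo1a} does \emph{not} apply there, because the index $W\in\{1,\dots,i\}$ need not diverge with $i$; your claim that ``the degree of $v_{W,\ell}$ satisfies the scaling hypotheses of Theorem~\ref{Teo1a}'' fails precisely on this event. The paper handles it by splitting off $\{V_t^j=v_j\}$, bounding $\sum_{j=1}^i\bbP(V_t^j=v_j,W=j)=i/n\to 1/(1+c(w))$, and then sending $w\to\infty$ to kill this error; on the Yule side the analogous conditioning $\{G_T\neq g_1\}$ becomes irrelevant in the same limit because the genus process is supercritical. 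Once this root case is removed, your identification of the limit via the residual age $\ln(1+c(w))-S$ is equivalent to the paper's direct appeal to the $m$-Yule structure.
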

		Using the previous theorem and  directly exploiting the properties of the $m$-Yule model we are able to recover
		the well-known result for the asymptotic degree distribution of the BA random graph.	
		\begin{prop}
			\label{Teo2}
			Consider an $m$-Yule model  $\{Y^m_{1/2,1}(T)\}_{T\geq0}$  and   the size $\mathcal{N}_T^m$ of a genus chosen uniformly at random
			at time $T$ from  it as in Theorem \ref{Teo1}.
			Then,
	        \begin{equation}
	        	\label{Yule}
				p_k=m(m+1)B(k,3), \qquad k\geq m,
			\end{equation}
			where $B(a,b)$ is the Beta function.
		\end{prop}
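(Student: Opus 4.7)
The plan is to invoke Theorem \ref{Teo1}, which yields $p_k=\lim_{T\to\infty}\bbP(\mathcal{N}_T^m=k)$, and to evaluate this limit directly inside the $m$-Yule model with $\beta=1$ and $\lambda=1/2$. Let $\{N_1(T)\}_{T\ge 0}$ be the genera-Yule process (parameter $1$, one initial genus) and, for each $j\ge 1$, let $\{N_{1/2}^{m,(j)}(s)\}_{s\ge 0}$ be the independent Yule process (parameter $1/2$, $m$ initial individuals) governing the $j$-th genus from its birth time $B_T^j$ onwards. If $J$ is uniform on $\{1,\dots,N_1(T)\}$, the size of the selected genus is $N_{1/2}^{m,(J)}(A_T)$, where $A_T:=T-B_T^J$ denotes its age. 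Conditioning on $(N_1,J,B_T^J)$ and setting $f(s):=\bbP(N_{1/2}^m(s)=k)$ gives $\bbP(\mathcal{N}_T^m=k)=\bbE[f(A_T)]$.

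First I would show that the age $A_T$ converges in distribution to $A\sim\mathrm{Exp}(1)$. For each fixed $A\ge 0$, because $N_1$ is non-decreasing,
\begin{equation*}
	\bbP(A_T>A)=\bbE\left[\frac{N_1(T-A)}{N_1(T)}\right].
\end{equation*}
The classical martingale $e^{-T}N_1(T)$ converges almost surely to a strictly positive $W\sim\mathrm{Exp}(1)$, and the same limit governs $e^{-(T-A)}N_1(T-A)$. Hence
\begin{equation*}
	\frac{N_1(T-A)}{N_1(T)}=e^{-A}\cdot\frac{e^{-(T-A)}N_1(T-A)}{e^{-T}N_1(T)}\xrightarrow{\text{a.s.}} e^{-A}\cdot\frac{W}{W}=e^{-A},
\end{equation*}
and, since the ratio is bounded by $1$, dominated convergence yields $\bbP(A_T>A)\to e^{-A}$.

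Since $f(s)=\binom{k-1}{k-m}e^{-ms/2}(1-e^{-s/2})^{k-m}$ is bounded and continuous on $[0,\infty)$, the weak convergence of $A_T$ gives
\begin{equation*}
	p_k=\int_0^\infty\binom{k-1}{k-m}e^{-ms/2}(1-e^{-s/2})^{k-m}e^{-s}ds.
\end{equation*}
The substitution $u=1-e^{-s/2}$ converts this into
\begin{equation*}
	p_k=2\binom{k-1}{k-m}\int_0^1 u^{k-m}(1-u)^{m+1}du=2\binom{k-1}{k-m}B(k-m+1,m+2),
\end{equation*}
and rewriting the factors via $\Gamma$-functions, together with the identity $B(k,3)=2\Gamma(k)/\Gamma(k+3)$, collapses the right-hand side to $m(m+1)B(k,3)$, as claimed.

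The main obstacle is the first step: one must recognise that the two Yule-process martingales $e^{-T}N_1(T)$ and $e^{-(T-A)}N_1(T-A)$ share the \emph{same} almost-sure limit $W$, so that their ratio degenerates to one, and then justify the interchange of limit and expectation via boundedness. The remaining work is a standard weak-convergence passage and a routine Beta-integral computation.
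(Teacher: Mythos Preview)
Your argument is correct and reaches the same Beta integral as the paper, but by a genuinely different route. The paper invokes the classical order-statistics property of the jump times of a rate-$1$ Yule process: conditionally on $N_1(T)$, the birth times in $(0,T]$ have density $e^{\tau}/(e^T-1)$, which yields an \emph{exact} finite-$T$ expression for $\bbP(\mathcal{N}_T^m=k)$ (their (\ref{first})--(\ref{third})) before letting $T\to\infty$. You bypass the order-statistics fact entirely and instead prove directly that the age $A_T$ of the uniformly chosen genus converges in law to an exponential of rate $1$, using the almost-sure martingale limit $e^{-T}N_1(T)\to W>0$ and dominated convergence; weak convergence together with the continuity and boundedness of $s\mapsto\bbP(N_{1/2}^m(s)=k)$ then delivers the limiting integral. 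The paper's approach buys an identity valid at each finite $T$; yours is closer in spirit to standard age-distribution arguments for supercritical branching processes and is arguably more robust, since it needs only $W>0$ a.s.\ rather than the full conditional structure of the jump times. One minor quibble: you use the symbol $A$ both for the fixed threshold and for the limiting random variable, and the exact identity is $\bbP(A_T\ge a)=\bbE[N_1(T-a)/N_1(T)]$ (or its left-limit version for the strict inequality); this is immaterial here because the limit law is continuous.
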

		\begin{obs}
			Notice that the distribution \eqref{Yule} coincides with the degree distribution of the BA model \cite{Bollobas2001}.
		\end{obs}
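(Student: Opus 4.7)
The plan is to compute $p_k = \lim_{T \to \infty}\bbP(\mathcal{N}_T^m = k)$ directly from the $m$-Yule model structure. Let $\tau_1 = 0 < \tau_2 < \tau_3 < \cdots$ denote the birth times of genera under the rate-$1$ Yule process $\{N_1(T)\}_{T \ge 0}$, and let $\{N_{1/2,i}^m\}_{i \ge 1}$ be i.i.d.\ copies of a rate-$1/2$ Yule process with $m$ initial species, independent of the genera process. The uniform sampling of a genus then gives
\begin{equation}
\bbP(\mathcal{N}_T^m = k) = \bbE\!\left[\frac{1}{N_1(T)} \sum_{i=1}^{N_1(T)} \mathbf{1}\{N_{1/2,i}^m(T - \tau_i) = k\}\right].
\end{equation}

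The key step is to identify the limiting distribution of the age $A = T - \tau_I$ of a uniformly chosen genus. Since the expected rate of new genus births at time $s$ is $\bbE[N_1(s)] = e^s$ while $\bbE[N_1(T)] = e^T$, the expected age density is $e^{-a}$ on $[0,T]$. Rigorously, the classical martingale convergence $e^{-T}N_1(T) \to W$ a.s.\ for some a.s.\ positive $W$, applied also at time $T - t$, yields $N_1(T-t)/N_1(T) \to e^{-t}$ a.s., so that the empirical age distribution over genera converges almost surely to an $\mathrm{Exp}(1)$ random variable.

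The marginal of the species process is the standard negative-binomial formula for a Yule process (also recoverable from Theorem \ref{Teo1a} at $b = 1$, $w_0 = 0$, $k_0 = m$):
\begin{equation}
\bbP(N_{1/2}^m(a) = k) = \binom{k-1}{k-m}\,e^{-ma/2}(1 - e^{-a/2})^{k-m}, \qquad k \ge m.
\end{equation}
Conditioning on the limiting age and exchanging limit and expectation (dominated convergence suffices since the indicator is bounded by $1$),
\begin{equation}
p_k = \binom{k-1}{k-m} \int_0^\infty e^{-ma/2}(1 - e^{-a/2})^{k-m}\,e^{-a}\,da.
\end{equation}
The substitution $u = e^{-a/2}$ (so $da = -2\,du/u$ and $e^{-a} = u^2$) turns this into $2\binom{k-1}{k-m}B(m+2, k-m+1)$; expanding the Beta function and the binomial coefficient in factorials and cancelling then yields $2m(m+1)/[k(k+1)(k+2)] = m(m+1)B(k,3)$, proving the claim.

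The main obstacle is the rigorous transfer from the finite-$T$ weighted empirical sum to the integral against the $\mathrm{Exp}(1)$ limiting age density: one must control the correlation between the random denominator $N_1(T)$, the birth times $\{\tau_i\}$, and the independent species processes. This is handled by first conditioning on the birth times (under which the species processes are independent) and combining a conditional law of large numbers over genera with the joint martingale convergence for $e^{-(T-t)}N_1(T-t)$ and $e^{-T}N_1(T)$. Once this limit exchange is justified, the remainder reduces to the single substitution and a routine $\Gamma$-function identity.
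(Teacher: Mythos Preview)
The statement you were asked to address is a Remark, and the paper gives no proof for it: it is simply the observation that the formula $p_k=m(m+1)B(k,3)$ obtained in Proposition~\ref{Teo2} coincides with the degree distribution derived by Bollob\'as et al.\ in \cite{Bollobas2001}. Since Theorem~\ref{Teo1} already establishes that $p_k$ equals the asymptotic BA degree distribution, the Remark merely notes that the closed form matches the literature; nothing needs to be proved.

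What you have actually written is an alternative proof of Proposition~\ref{Teo2} (the derivation of the formula $m(m+1)B(k,3)$ from the $m$-Yule model). Compared to the paper's proof, your route is genuinely different. The paper uses the classical order-statistics property of a Yule process: conditionally on $N_1(T)$, the genus birth times are i.i.d.\ with distribution function $(e^{\tau}-1)/(e^{T}-1)$ on $[0,T]$. This immediately gives an \emph{exact} finite-$T$ expression
\[
\bbP(\mathcal{N}_T^m=k)=\frac{1}{1-e^{-T}}\int_0^T \binom{k-1}{m-1}e^{-my/2}(1-e^{-y/2})^{k-m}e^{-y}\,\textup{d}y,
\]
and the limit $T\to\infty$ is then trivial. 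Your approach instead argues that the age of a uniformly chosen genus converges in law to $\mathrm{Exp}(1)$ via the martingale limit $e^{-T}N_1(T)\to W$, and then integrates against the limiting density. This is correct but, as you yourself flag, it requires justifying a limit exchange through a conditional law of large numbers plus joint martingale convergence. The paper's order-statistics argument sidesteps that obstacle entirely: because the conditional age distribution at finite $T$ is already explicit, no empirical-measure or LLN argument is needed, and the passage to the limit is just monotone convergence of an integral. Both approaches land on the same Beta integral after the substitution $u=e^{-a/2}$.
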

		\begin{obs}\label{cere}
	       	In Section \ref{proofs} we prove the technical Lemma \ref{lema1} on the behaviour of the degree process for
	       	a fixed vertex. Theorems \ref{Teo1a}, \ref{Teo1} and Proposition \ref{Teo2} can also be proved for any random graph process for which such Lemma \ref{lema1} holds. Notice that if necessary, Lemma \ref{lema1} can be extended to the case in which the constant $b_2$ can be taken equal to zero.
	    \end{obs}
	    \begin{obs}\label{independentModel}
	       	An alternative example in which Lemma \ref{lema1} still holds is the ``independent'' model:
	       	for each newly added vertex its $m$  edges are connected to old vertices independently one another. Formally, in Definition  \ref{BAdefinition} replace (\ref{BA}) by
	       	\begin{align}
				\label{BAindependent}
				\mathbb{P}(v_{n+1} \longrightarrow v) =
				\begin{cases}
					\dfrac{d(v,n(m+1))}{2(mn+1)-1}, &  v\neq v_{n+1}, \\ \\
					\dfrac{d(v,n(m+1))+1}{2(mn+1)-1}, & v=v_{n+1}.
				\end{cases}
			\end{align}
			In Section \ref{proofs}, which is devoted to the proofs, Remark \ref{RemarkLemma51} explains why Lemma \ref{lema1} holds for the independent model.
		\end{obs}

		\section{Sampling a random vertex}\label{monotone}

        Before going through the proofs of Theorem \ref{Teo1a} and Theorem \ref{Teo1},  we introduce here the general notion of \emph{planted model} and a fundamental procedure we will make use in the next section to prove the relationship
		between two random quantities in the BA model and in the $m$-Yule model. In particular, we will put in relation the degree of a vertex chosen uniformly at random in the BA model and the number of species of a genus
		chosen uniformly at random from one of the $m$-Yule models,
		also chosen uniformly at random from the set of all $m$-Yule models in the planted model.
		
		For a greater generality we
		consider the case in which the number of edges added each time a vertex appears, form a sequence $\{M_j\}_{j\geq1}$ of random variables taking values in $\bbN^*$ almost surely.
		This result can be easily specialized to the case of the BA model,
		that is $M_j=m$ a.s.\ for every $j$.
		An example is the random graph related to Simon model (see \cite{PachonPolitoSacerdote}): it can be related to a Yule model where the above random variables are independent and geometrically distributed.
		
		In order to describe the sampling procedure, we introduce first a model that we call the \emph{planted model} for the random graph
		$(\bbG_t)_{t\geq1}$. The idea underlying the planted model is to superimpose a tree structure on the graph
		which is independent of the degree processes. 

		We start by noting that, at each time of the form
		$\mathfrak{T}_i=\sum_{r=1}^i(M_r+1)$, the graph $\bbG_{\mathfrak{T}_i}$ has exactly $i$ vertices, $i\in\bbN^*$. We refer to them
		as the \emph{planted vertices}.
		Let us now consider the value $i$ to be fixed; to obtain the tree structure at the following times $\mathfrak{T}_{n+1}$, $n\geq i$,
		we attribute to $v_{n+1}$ the role of child of a vertex chosen uniformly at random
		from the set of the existing vertices $\{v_1,v_2,\dots,v_n\}$.
		Iterating this procedure we obtain chains of successive offsprings of each of the planted vertices $\{v_1,v_2,\dots,v_i\}$.
		Further, we call a vertex $v$ that appeared after $v_j$, $j=1,\dots,i$, a descendant of $v_j$ if both $v$ and $v_j$ belong to the same ancestral line.
		We order the descendant of $v_j$ by renaming $v$ as $v_{j,\ell}$, if $v$ is the $\ell$-th descendant of $v_j$ and
		denote $v_j$ as $v_{j,1}$, that is, $v_j$ is in turn, its first descendant.
		In this way we construct $i$ birth processes in discrete time, $\{b(v_j,\mathfrak{T}_n)\}_{n \ge i}$, $j=1,\dots,i$.
		Here $b(v_j,\mathfrak{T}_n)$, $j=1,\dots,i$, $n \ge i$, is the total number of descendants of $v_j$ at time $\mathfrak{T}_n$.
		Table \ref{tableuno} shows an example of the construction of the planted model.
		Note that we have: 
		\begin{itemize} 
			\item $b(v_j,\mathfrak{T}_i)=1$, $j=1,\dots,i$;
			\item $\bbP[b(v_j,\mathfrak{T}_{n+1})= k+1\mid b(v_j,\mathfrak{T}_n)= k]=k/n$, $k \geq 1$, $n\ge i$, $j=1,\dots,i$.
		\end{itemize}
 
		The second equality holds because at time $\mathfrak{T}_{n+1}$, $n\geq i$, each existing vertex in the set $\{v_1,v_2,\dots,v_n\}$
		may give birth to a new one with probability $1/n$. 
			
		Note that, for a fixed $i \ge 1$, the planted model is defined for $n\geq i$. Thus for example, given a value
		of $i$ there is
		no process $\{b(v_{j},\mathfrak{T}_n)\}_{n\geq i}$ with $j>i$, because $j$ has to be an element of $\{1,\dots,i\}$.
		The dynamic of the planted model then proceeds for $n \geq i$. Finally, note that
		the $i$ discrete-time birth processes are exchangeable.
		
		\begin{table} 
			\centering
			\begin{tabular}{c|c:c|c:c|c|c:c}
				$n$ & & $b(v_1,\mathfrak{T}_n)$ & & $b(v_2,\mathfrak{T}_n)$ & $\dots$ & & $b(v_i,\mathfrak{T}_n)$ \\\hline & & & & & & &\\
				$i$ & $v_1=v_{1,1}$ & 1 & $v_2=v_{2,1}$ & 1 & & $v_i=v_{i,1}$ & 1 \\
				$i+1$ & & 1 & $v_{i+1}=v_{2,2}$ & 2 & & & 1 \\
				$i+2$ & & 1 & & 2 & $\dots$ & $v_{i+2}=v_{i,2}$ & 2 \\
				$i+3$ & $v_{i+3}=v_{1,2}$ & 2 & & 2 & & & 2 \\
				$i+4$ & & 2 & $v_{i+4}=v_{2,3}$ & 3 & & & 2 \\
				$\dots$ & $\dots$ & $\dots$ & $\dots$ & $\dots$ & $\dots$ & $\dots$ & $\dots$
			\end{tabular}
			\caption{\label{tableuno}(First line): The construction starts with $i$ discrete-time birth processes at time
				$\mathfrak{T}_i$, each one with one individual. (Second line): At time 
				$\mathfrak{T}_{i+1}$ a new vertex $v_{i+1}$ appears. The vertex $v_{i+1}$ is assigned as a child to one of
				the existing vertices $\{v_1,v_2,\dots,v_i\}$  with probability $1/i$. In this table the appearing vertex $v_{i+1}$
				becomes a child of $v_2$ and
				consequently it is renamed as $v_{2,2}$, that is the second individual in the birth process relative to $v_2$.  
				(Next lines): At times $\mathfrak{T}_{n+1}$, $n\geq i$, the vertex $v_{n+1}$ appears and it is assigned
				to one of the existing vertices with probability $1/n$. Observe that in this example
				$b(v_2,\mathfrak{T}_{i+4})=3$. Given this information, $\bbP[b(v_2,\mathfrak{T}_{i+5})=4]=3/(i+4)$.} 
		\end{table}

		\subsubsection{Sampling from the planted model}\label{samplingPlanted}
			
			Consider the following procedure. Given a realization of $\{b(v_j,\mathfrak{T}_n)\}_{n \ge i}$, $j=1,\dots,i$,
			\begin{enumerate}
				\item choose one of the $i$ discrete-time birth processes with probability proportional to the number of its vertices;  
				\item choose a vertex uniformly at random among those belonging to the realization of the selected birth process.
			\end{enumerate}
			Our  focus is on the selected vertex $v_{j,\ell}$,  $j=1,\dots, i$, $ \ell=1,\dots, b(v_j,\mathfrak{T}_n)$
			and on the selected birth process. Let $W$ be the index of the birth process chosen. Plainly, $W$ takes values in	$\{1,\dots,i\}$ almost surely.
			\begin{prop}
				\label{plantedinduce}
				It holds,
				\begin{enumerate}
					 \item $\bbP(W=j)=1/i$, $j \in \{1,\dots,i\}$,
					 \item $\bbP(\{v_{j,\ell} \text{ is selected}\})=1/n$.
				\end{enumerate}
			\end{prop}
			\begin{proof}
				It immediately follows from the exchangeability of the $i$ discrete-time birth processes.
			\end{proof}
			\begin{obs}
				The suggested algorithm is a way to select a vertex uniformly at random from $\bbG_{\mathfrak{T}_n}$, $n\geq i$,
				and refers to a given realization of the $i$ birth processes $\{b(v_j,\mathfrak{T}_n)\}_{n \ge i}$, $j=1,\dots,i$.
				Averaging on all possible realizations of the $i$ birth processes we actually select a vertex uniformly at random:
				we first choose one of the $i$ birth processes belonging to the planted model with uniform probability, then
				we select a vertex among those belonging to the chosen birth process again with uniform probability.
			\end{obs}
	
	\section{Proofs}\label{proofs}
	
		We first give a brief outline of the proofs of the main results described in Section \ref{mainresults}.
		Regarding Theorem \ref{Teo1a},  we  start by showing that the transition probabilities of the degree process of a fixed vertex with sufficiently large index in the
		BA model are bounded above and below (Lemma \ref{lema1}). With these bounds we construct two Markov processes coupled
		with the original degree process 
		(Lemma \ref{teo2.1} and Corollary \ref{cor2}).
		In Lemma \ref{lema1conv}, we exploit this coupling to conclude that the finite-dimensional distribution
		of the degree of a vertex in the BA model converges to the finite-dimensional distribution of the number
		of individuals in a Yule process with initial
		population size equal to $m$. Notice that this result is consistent with the analysis of preferential attachment trees
		performed through continuous-time branching processes (see e.g.\ \cite{RSA:RSA20137,Bhamidi}).
			
		To prove Theorem \ref{Teo1} we proceed according to the following steps.
		First, by making use of the planted model and the sample procedure from the planted model described in Section \ref{monotone}, we make explicit the relationship between the deterministic appearance of new vertices in the BA model and the random appearance of new births in a continuous-time Yule process.
		The key point is that, by Theorem \ref{plantedinduce}, the choice of a vertex with uniform distribution in the BA model
		is equivalent to choosing a birth process from the planted model with uniform distribution and then
		choosing a vertex among those belonging to the selected birth process, again with uniform distribution. 
		Then, in Lemma \ref{S-Yule2} we prove  that the number of individuals in each birth process
		of the planted model, say $\{b^j_i\}_{i \ge 1}$, where $b^j_i=\{b(v_j,n(m+1))\}_{n\geq i}$, $1\leq j\leq i$,  converges in distribution as $i\rightarrow\infty$,
		to the size of a Yule process with parameter $\beta=1$ and with one initial progenitor.

		\subsection{Auxiliary lemmas and the proof of Theorem \ref{Teo1a}}
		
			\label{convergencia}
			The proof of Theorem \ref{Teo1a} can be summarized in two main steps. Within the structure of the BA model we first identify two different counting processes
			in discrete time,				
			one for the appearing of in-links of each specific vertex  and the other related to the creation of new vertices.
			Then, we  prove that these two processes converge to the two birth processes which are at the basis
			of the definition of an $m$-Yule model.
			
			Before  starting the construction of the process for the appearance of in-links
			of a fixed vertex we introduce the following definition. 
			\begin{defn}
				\label{complete}
				We say that a vertex $v_i$  appears  ``complete'' when   it has appeared in the BA random graph
				process together with all the directed edges  originated from it.
			\end{defn}
			Note that the degree of a complete
			vertex is at least $m$, and  at time $t=n(m+1)$,  the BA  model has for the first time exactly $n$ complete vertices.

			Next we determine how the degree of a fixed vertex $v_i$, for a sufficiently large $i$, changes during the time until a new complete vertex appears. 
			\begin{lem}
				\label{lema1}
				Let  $({G}_m^t)_{t\geq 1}$ be the random graph process defining the BA model
				and let $d(v_i,t)$ denote  the degree of an existing vertex $v_i$ at time $t$, $i\leq n$.
				Given that $d(v_i,n(m+1))=k$, $n > k\geq m$, for sufficiently large $i$ there exist constants $b_1>b_2>0$ and $c_1,c_2>0$ such that
				\begin{align}
					\label{transitiond}
					\frac{k}{2(n+1)}+c_2\left(\frac{k}{n}\right)^2& <\bbP[d(v_i,(n+1)(m+1))=k+1 | d(v_i,n(m+1))=k] \notag\\
					& < \frac{k}{2n}+c_1\left(\frac{k}{n}\right)^2,
				\end{align}
				and, for $m > 1$,
				\begin{align}
					\label{transitiondn}
					b_2\left(\frac{k}{n}\right)^2\leq\bbP[k+2 \le d(v_i,(n+1)(m+1)) \le k+m| d(v_i,n(m+1))=k]\leq b_1\left(\frac{k}{n}\right)^2.
				\end{align}
				Furthermore,
				\begin{align}
					\label{initCond}
					\bbP[d(v_{n+1},(n+1)(m+1))=m]=\prod_{\ell=2}^{m+1}\left(1-\frac{1}{2(mn+\ell-1)-1}\right)=1-\Theta(1/n),
				\end{align}
				where we make use of the asymptotic Big Theta notation \cite{knuth}.
			\end{lem}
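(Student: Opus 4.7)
The plan is to decompose the time interval $[n(m+1),(n+1)(m+1)]$ into the $m+1$ elementary sub-steps of the BA construction. At $t=n(m+1)+1$ the new vertex $v_{n+1}$ appears but no edge is drawn (so $d(v_i,n(m+1)+1)=k$ is unchanged for $i\le n$); at each subsequent sub-step $j=2,\dots,m+1$ the rule \eqref{BA} attaches the next edge to $v_i$ with conditional probability $d(v_i,n(m+1)+j-1)/(2(mn+j-1)-1)$. Writing $X_j$ for the indicator of attachment at sub-step $j$, one has $d(v_i,(n+1)(m+1))-d(v_i,n(m+1))=\sum_{j=2}^{m+1}X_j$, and everything reduces to bounding the probability of this sum taking a prescribed value.

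For \eqref{transitiond}, I would decompose $\{\sum X_j=1\}$ according to the unique firing sub-step:
\[
\bbP\Bigl[\textstyle\sum_{j=2}^{m+1}X_j=1\Bigr]=\sum_{j=2}^{m+1}\Bigl[\prod_{\ell=2}^{j-1}\bigl(1-\tfrac{k}{2(mn+\ell-1)-1}\bigr)\Bigr]\tfrac{k}{2(mn+j-1)-1}\Bigl[\prod_{\ell=j+1}^{m+1}\bigl(1-\tfrac{k+1}{2(mn+\ell-1)-1}\bigr)\Bigr],
\]
the three factors encoding respectively ``no firing before $j$'' (degree still $k$), ``firing at $j$'', and ``no firing after $j$'' (degree now $k+1$). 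The upper bound follows by dropping the two products and using $\tfrac{k}{2(mn+j-1)-1}\le\tfrac{k}{2mn+1}$, giving $\bbP\le\tfrac{mk}{2mn+1}\le\tfrac{k}{2n}<\tfrac{k}{2n}+\tfrac{c_1}{n^2}$. For the lower bound I would use $\tfrac{k}{2(mn+j-1)-1}\ge\tfrac{k}{2m(n+1)-1}$ together with the uniform bound $\prod(\cdot)\ge 1-K/n$, with $K=K(k,m)$, expand $\tfrac{mk}{2m(n+1)-1}=\tfrac{k}{2(n+1)}+\tfrac{k}{4m(n+1)^2}+O(n^{-3})$ to isolate a positive $\Theta(n^{-2})$ surplus and a negative $O(n^{-2})$ correction from $(1-K/n)$, and balance the two.

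For \eqref{transitiondn}, the event $\{\sum X_j=\ell\}$ with $\ell\ge 2$ forces $\ell$ of the $m$ sub-steps to fire; a union bound over the $\binom{m}{\ell}$ size-$\ell$ firing patterns, each contributing a product of $\ell$ conditional attachment probabilities of order $1/n$, yields the upper bound of order $n^{-\ell}\le n^{-2}$, while the matching lower order comes from selecting one such pattern and bounding each factor from below. Finally, \eqref{initCond} follows from the observation that $d(v_{n+1},(n+1)(m+1))=m$ is equivalent to ``no self-loop is drawn at any of the $m$ sub-steps''; under the convention used in \eqref{BA} the conditional probability of a self-loop at sub-step $\ell$, given no previous self-loop, is $1/(2(mn+\ell-1)-1)$, so multiplying the corresponding no-loop probabilities produces the stated product, and a first-order expansion $1-\sum_\ell\tfrac{1}{2(mn+\ell-1)-1}+O(n^{-2})$ gives the $1-O(1/n)$ form.

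The delicate part will be the lower bound in \eqref{transitiond}: the two $\Theta(n^{-2})$ contributions—the positive gap $\tfrac{k}{2n}-\tfrac{k}{2(n+1)}$ and the negative correction from $\prod A_jC_j\le 1$—are of the same order, so a careful second-order expansion of the denominators $2(mn+j-1)-1$ and of the products is needed to pin down the sign of the residue and to supply an explicit positive $c_2$; the remaining bounds all follow from first-order estimates and a union bound.
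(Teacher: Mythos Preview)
Your plan is correct and close in spirit to the paper's argument, but the paper organizes the computation of \eqref{transitiond} differently. You compute $\bbP[\sum_j X_j=1]$ \emph{directly} by summing over the unique firing step; the paper instead computes it \emph{indirectly} via the complement,
\[
\bbP\Bigl[\textstyle\sum_j X_j=1\Bigr]=1-\bbP\Bigl[\textstyle\sum_j X_j=0\Bigr]-\bbP\Bigl[\textstyle\sum_j X_j\ge 2\Bigr].
\]
The point is that $\bbP[\sum X_j=0]=\prod_{\ell=2}^{m+1}\bigl(1-\tfrac{k}{2(mn+\ell-1)-1}\bigr)$ is a single explicit product, which the paper sandwiches between $(1-\tfrac{k}{2mn+1})^m$ and $(1-\tfrac{k}{2m(n+1)-1})^m$ and then expands; together with the already-established $\bbP[\sum X_j\ge 2]=O(1/n^2)$ this yields both inequalities in \eqref{transitiond} at once. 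This complement route replaces your sum of $m$ products (each a product of $m-1$ ``no-fire'' factors times one ``fire'' factor) by a single $m$-fold product, so the second-order bookkeeping you rightly flag as delicate is somewhat lighter: the $O(1/n^2)$ residue is carried by one binomial expansion rather than by balancing a positive $\Theta(n^{-2})$ surplus against a negative $\Theta(n^{-2})$ correction coming from bounding $m$ separate products. For \eqref{transitiondn} and \eqref{initCond} your argument matches the paper's essentially verbatim.
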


			\begin{proof}
				Our aim is to determine the change of degree of a fixed vertex during the time interval $(n(m+1),(n+1)(m+1)]$,
				i.e., during the time interval necessary to switch from $n$ to $(n+1)$ complete vertices.
				
				Let us fix $t=n(m+1)$ and follow the graph growth during the considered interval.
				At time $n(m+1)+1$ a new vertex $v_{n+1}$ (without edges) appears.
				Then from time  $n(m+1)+2$ to time $(n+1)(m+1)$ a directed edge from $v_{n+1}$ to an existing vertex $v_i$, $i\leq n+1$,
				is added. The vertex  $v_i$ is chosen with probability given by (\ref{BA}). Let  $Y_{v_i}^{n}$ be the total number
				of incoming edges  to $v_i$,  $i\leq n$, added to $v_i$ during the time interval $(n(m+1),(n+1)(m+1)]$.				
				Note that $\bbP[d(v_i,(n+1)(m+1))=k+\ell\mid d(v_i,n(m+1))=k]=\bbP[Y_{v_{i}}^{n}=\ell\mid d(v_i,n(m+1))=k]$, $\ell=0,\dots, m$.
				To estimate the latter conditional probabilities for a sufficiently large $i$ we
				distinguish the cases $Y_{v_{i}}^{n}=0$, $Y_{v_{i}}^{n}=1$, and $Y_{v_{i}}^{n}\geq2$.

				In the first case, considering the probabilities (\ref{BA}) we have
				\begin{align}
					\label{d0}
					\bbP[Y_{v_{i}}^{n}=0\mid d(v_i,n(m+1))=k]=\prod_{\ell=2}^{m+1}\left(1-\frac{k}{2(mn+\ell-1)-1}\right).
				\end{align}
				Since $\prod_{\ell=2}^{m+1}\left(1-\frac{k}{2(mn+\ell-1)-1}\right) \leq \left(1-\frac{k}{2(mn+m)-1}\right)^m$,
				we get the upper bound for (\ref{d0}),
				\begin{align}
					\label{ceromenor}
					\left(1-\frac{k}{2(mn+m)-1}\right)^m=1-\frac{mk}{2m(n+1)-1}+ O\left(\frac{k^2}{n^2}\right).
				\end{align}
				Furthermore, since
				$\prod_{\ell=2}^{m+1}\left(1-\frac{k}{2(mn+\ell-1)-1}\right) \geq \left(1-\frac{k}{2(mn+1)-1}\right)^m$, we get the lower bound
				\begin{align}
					\label{ceromayor}
					\left(1-\frac{k}{2(mn+1)-1}\right)^m=1-\frac{mk}{2mn+1}+ O\left(\frac{k^2}{n^2}\right).
				\end{align}

				Now we move first to the third case. We observe that if $m=1$
				then $\bbP(Y_{v_{i}}^{n}\geq2\mid d(v_i,n(m+1))=k)=0$.	Thus, we calculate such probability for $m>1$ only.  
				Furthermore, since we do not need a closed form of $\bbP(Y_{v_{i}}^{n}\geq2\mid d(v_i,n(m+1))=k)$,
				we limit ourselves to estimate  its  order of magnitude. 
				For each $y=2,\dots,m$, the event $\{Y_{v_{i}}^{n}=y\}$ means that 
				$v_i$ gets $y$ new incoming edges joining $v_i$ at the times $t=n(m+1)+\ell$, $\ell=2,\dots,m+1$.
				Given the value of the degree of $v_i$ at time $t-1$, considering (\ref{BA}), a new edge is attached to $v_i$ at time
				$t=n(m+1)+\ell$ with probability
				\begin{align}\label{pvnl}
					p_{v_i}^{n,\ell}:=\frac{d(v_i,n(m+1)+\ell-1)}{2(mn+\ell-1)-1}.
				\end{align}
				Let $\Omega$ be the space of all sequences of $m$ dichotomous independent experiments, performed at times
				$t=n(m+1)+\ell$, $\ell=2,\dots,m+1$, with exactly $y$ successes. Assume that $p_{v_i}^{n,\ell}$, $\ell=2,\dots,m+1$, are the
				probabilities of success. Note that the cardinality of $\Omega$ is equal to that of the set of all $y$-combinations from a
				given set of $m$ distinct elements, i.e.\ $|\Omega|=\binom{m}{y}$. Take the set $\{2,\dots,m+1\}$ and consider its $y$-combinations,
				say $C_y=\{e_1,\dots,e_{\binom{m}{y}}\}$ (e.g.\ ordered by their smallest element). For each $e\in C_y$, let $e(j)$ denote
				the position of the  $j$-th success in $e$, $j=1,\dots,y$. We have,			
				\begin{align}\label{newdos}
					\bbP(Y_{v_{i}}^{n}=y| d(v_i,n(m+1))=k)&=\sum_{e\in C_y}\prod_{j=1}^y p_{v_i}^{n,e(j)}
					\prod_{\ell\in\{2,\dots,m+1\},\ell\notin e(1),\dots,e(y)}(1-p_{v_i}^{n,\ell})\nonumber\\
			   	&=\binom{m}{y}\Theta\left(\frac{k^y}{n^y}\right)\left[1-\Theta\left(\frac{k}{n}\right)\right]^{m-y}\nonumber\\
				&=\binom{m}{y}\Theta\left(\frac{k^y}{n^y}\right)\sum_{\ell=0}^{m-y}
				\binom{m-y}{\ell}(-1)^{\ell}\Theta\left(\frac{k^\ell}{n^{\ell}}\right)\nonumber\\
					&=\Theta\left(\frac{k^y}{n^y}\right), \qquad 2\leq y\leq m. 
				\end{align}
				Hence,
				\begin{align}
					\label{dos}
					\bbP(Y_{v_{i}}^{n}\geq2\mid d(v_i,n(m+1))=k)
					=\Theta\left(\frac{k^2}{n^2}\right).
				\end{align}
				Finally, by  (\ref{ceromayor}) and (\ref{dos}) we obtain that
				$\bbP(Y_{v_{i}}^{n}=1\mid d(v_i,n(m+1))=k)$ is at most
				\begin{align}
					\label{unomenor}
				1-\left[1-\frac{mk}{2mn+1}+O\left(\frac{k^2}{n^2}\right)\right]-\Theta\left(\frac{k^2}{n^2}\right)
					<\frac{k}{2n}+O\left(\frac{k^2}{n^2}\right),
				\end{align}
				and by (\ref{ceromenor}) and (\ref{dos}), $\bbP(Y_{v_{i}}^{n}=1\mid d(v_i,n(m+1))=k)$ is at least
				\begin{align}
					\label{unomayor}
				1-\left[1-\frac{mk}{2m(n+1)-1}+O\left(\frac{k^2}{n^2}\right)\right]-\Theta\left(\frac{k^2}{n^2}\right)
					>\frac{k}{2(n+1)}+O\left(\frac{k^2}{n^2}\right).
				\end{align}
				Therefore, for sufficiently large $i$ we have that, for each $k\ge m$, there exist $b_1>b_2>0$ such that (\ref{dos})
				gives (\ref{transitiondn}), and $c_1,c_2>0$ such that  (\ref{unomenor}) and (\ref{unomayor}) give (\ref{transitiond}).

				In order to determine (\ref{initCond}),  let $X_{v_{n+1}}^{n}$ be the number of incoming edges  from $v_{n+1}$ to itself
				during the time interval $(n(m+1), (n+1)(m+1)]$,
				that is the number of loops. Note that during this period  $X_{v_{n+1}}^{n}$ can be at most
				equal to $m$, since at time $n(m+1)+1$ no edge is added.   
				Thus, by (\ref{BA}), the probability of no loops for  $v_{n+1}$  during such time interval  is given by 
				\begin{align}\label{eventosEquiv}
					\bbP(X_{v_{n+1}}^{n}=0)=\prod_{i=2}^{m+1}\left(1-\frac{1}{2(mn+i-1)-1}\right)= [1-\Theta(1/n)]^m= 1-\Theta(1/n).
				\end{align}
				If the number of loops for $v_{n+1}$ is zero, this is equivalent to say that when $v_{n+1}$ appears
				complete, its degree is equal to $m$. Thus, by (\ref{eventosEquiv}) we can write
				$\bbP[d(v_{n+1},(n+1)(m+1))=m]=1-\Theta(1/n)$, so that the proof is complete.
			\end{proof}

			\begin{obs}\label{RemarkLemma51}
				For the independent model described in Remark \ref{independentModel}, note that the left-hand side of formula (\ref{d0}) is equal to (\ref{ceromayor}), formula (\ref{pvnl}) is equal to $p_{v_i}^{n,2}$ for all $\ell=2,\dots,m+1$, $\bbP(Y_{v_{i}}^{n}=1\mid d(v_i,n(m+1))=k)$ is equal to (\ref{unomenor}) and 
				\begin{equation}
					\mathbb{P}[d(v_{n+1},(n+1)(m+1))=m]=\left(1-\frac{1}{2(mn+1)-1}\right)^m=1-\Theta(1/n).
				\end{equation}
			\end{obs}

			Now we consider the degree process	$\{d(v_i,n(m+1))\}_{n\geq i}$,
			indexed by $n$, where  $d(v_i,n(m+1))$
			satisfies (\ref{transitiond}), (\ref{transitiondn}) and (\ref{initCond}). Let $E=\{m,m+1,\dots\}$
			be the state space of the process $\{d(v_i,n(m+1))\}_{n \ge i}$ and let $\bbM(E)$ be the class
			of probability measures on the space $E$ endowed
			with the $\sigma$-algebra $\bbF=\mathcal{P}(E)$, the power set  of $E$. The degree process	$\{d(v_i,n(m+1))\}_{n\geq i}$ is defined
			on the product space $(E^{\infty},\bbF^{\infty})=(\times_{n=i}^{\infty}E,\otimes_{n=i}^{\infty}\bbF)$.
			The process from time $i$ to time $i+h$, $\{d(v_i,n(m+1))\}_{n=i}^{i+h}$ takes values in the
			product space $(E^{h},\bbF^{h})=(\times_{n=i}^{i+h}E,\otimes_{n=i}^{i+h}\bbF)$.
			The elements of the spaces $(E^{h},\bbF^{h})$ and $(E^{\infty},\bbF^{\infty})$
			will be denoted by $x^{i+h}=(x_i,x_{i+1},\dots,x_{i+h})$ and $x^{\infty}=(x_i,x_{i+1},\dots)$, respectively.
			We say that $x^{i+h}\leq y^{i+h}$ if and only if $x_{i+j}\leq y_{i+j}$ for all $0\leq j\leq h$.
			
			To prove Theorem \ref{Teo1a}, we proceed through two steps:
			
			\begin{enumerate}
				\item we define two Markov processes,
					on the same probability space as  $\{d(v_i,n(m+1))\}_{n\geq i}$, determined by suitable Markov kernels and
					we show that those two processes bound  from above and below the degree process of the BA model (Lemma \ref{teo2.1} and Corollary \ref{cor2});
				\item we prove that those two processes, each of them evaluated at a convenient time, converge in
					distribution as $i\rightarrow\infty$ (and therefore as $n \rightarrow \infty$) to a unique process evaluated at a unique time (Lemma \ref{lema1conv}).
			\end{enumerate}
			
			With respect to the first step, for any $m>1$, we define $p_{n+1}$ to be a positive function  on $E^n\times E$,
			measurable with respect to $\bbF^n\otimes \bbF$,
			and given by
			\begin{align}\label{pijk}
				p_{n+1}(x^n,x)=p_{n+1}(x^n,x_n+\ell)=
				\begin{cases}
					\frac{x_n}{2(n+1)}+c_2\left(\frac{x_n}{n}\right)^2, & \quad \ell=1, \\
					b_2\left(\frac{x_n}{n}\right)^2, & \quad \ell=2,\\
					1-\frac{x_n}{2(n+1)}-(b_2+c_2)\left(\frac{x_n}{n}\right)^2, & \quad  \ell=0,\\
					0, & \text{otherwise.}
				\end{cases}
			\end{align}
			Note that this function depends only on $x_n$, the last element of $x^n$, and $n$.
			Then we define the following Markov transition kernel  $K_{n+1}^p$ from  $E^n\times\bbF$ into $[0,1]$: 
			\begin{align}
				K_{n+1}^p(x^n,B)=\sum_{x\in B}p_{n+1}(x^n,x),\qquad x^n\in E^n, \: B\in\bbF.
			\end{align}
			The mapping $B \rightarrow K_{n+1}^p(x^n, B)$  is a measure $P_{n+1}\in\bbM(E)$ for every $x^{n}\in E^{n}$.
			Similarly we define the function 
			\begin{align}
				\label{rijk}
				r_{n+1}(z^n,z)=r_{n+1}(z^n,z_n+\ell)=
				\begin{cases}
					\frac{z_n}{2n}+c_1\left(\frac{z_n}{n}\right)^2, & \quad \ell=1, \\
					b_1\left(\frac{z_n}{n}\right)^2, & \quad \ell=m,\\
					1-\frac{z_n}{2n}-\left( b_1+c_1 \right)\left(\frac{z_n}{n}\right)^2, & \quad  \ell=0,\\
					0, & \text{otherwise,}
				\end{cases}
			\end{align}
			that we associate to the Markov transition kernel $K_{n+1}^r$,
			where $B \rightarrow K_{n+1}^r(z^n, B)$, is a measure $R_{n+1}\in\bbM(E)$ for every $z^{n}\in E^{n}$.
			Note that if $x_i\leq z_i$, $b_2<b_1$ and letting $c_1>c_2$, then by (\ref{pijk}) and (\ref{rijk}),
			$x_n\leq z_n$, $n> i$. Take  $y_n\in E$ such that, $x_n\leq y_n\leq z_n$, $n\geq i$. Then,
			from (\ref{pijk}) and (\ref{rijk}), there exists a
			function $q_{n+1}(y^n,y) = q_{n+1}(y^n,y_n+\ell)$, such that:
			\begin{align}
				\label{qijk}
				&p_{n+1}(x^n,x_n+1) < q_{n+1}(y^n,y_{n}+1)< r_{n+1}(z^n,z_n+1),\\
				&p_{n+1}(x^n,x_n+2) < \sum_{\ell=2}^m q_{n+1}(y^n,y_{n}+\ell)< r_{n+1}(z^n,z_n+m), \nonumber\\
				&q_{n+1}(y^n,y_{n}) = 1-\sum_{\ell=1}^m q_{n+1}(y^n,y_{n}+\ell), \notag
			\end{align}
			whenever $x_n\leq y_n\leq z_n$, $n\geq i$.
			We associate this function to a further Markov transition kernel $K_{n+1}^q$ in the same way as $K_{n+1}^p$
			and  $K_{n+1}^r$, where $B \rightarrow K_{n+1}^q(y^n, B)$  is a measure $Q_{n+1}\in\bbM(E)$
			for every $y^{n}\in E^{n}$. 
			
			In order to prove that there exist two processes bounding respectively from above and below the degree process of
			the BA model we first need the following result:
			\begin{lem}
				\label{teo2.1}
				Let  $X_i$, $Y_i$, and $Z_i$, be random variables on $E$ with distributions $P_i$, $Q_i$ and $R_i$,
				respectively, and satisfying $\bbP(X_i=Y_i=Z_i)=1$.  Then there exist random variables $X_{n+1}$, $Y_{n+1}$,
				and $Z_{n+1}$, $n\geq i$, taking values in $E$,
				such that  the conditional distributions of $X_{n+1}$ given $X_n=x_n$,
				$Y_{n+1}$ given $Y_n=y_n$, and $Z_{n+1}$ given $Z_n=z_n$, are exactly
				$p_{n+1}(x^n,\cdot)$, $q_{n+1}(y^n,\cdot)$, and $r_{n+1}(z^n,\cdot)$,
				respectively. Moreover,
				\begin{align}
					\label{coupling1}
					\bbP(X_n\leq Y_n\leq Z_n, \: n=i, \: i+1,\: \dots)=1.
				\end{align}
			\end{lem}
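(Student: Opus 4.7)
The plan is to build the three chains on a common probability space by induction on $n \geq i$, using an inverse-CDF (monotone) coupling driven by a single auxiliary uniform at each step. The substantive content is already encoded in the inequalities (\ref{qijk}): I will read them as saying that, whenever $x_n \leq y_n \leq z_n$, the one-step \emph{increment} distributions induced by $p_{n+1}(x^n,\cdot)$, $q_{n+1}(y^n,\cdot)$, $r_{n+1}(z^n,\cdot)$ are stochastically ordered. Once that is in hand, the standard quantile construction reproduces the prescribed kernels as marginals and forces the pointwise ordering of the three jumps, after which the coordinatewise ordering at time $n+1$ follows by adding the inductive hypothesis $X_n \leq Y_n \leq Z_n$ to the ordered increments.

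First I would verify the stochastic ordering of the increments. Writing $\Delta X$, $\Delta Y$, $\Delta Z$ for the jumps generated by the three kernels at states $x_n \leq y_n \leq z_n$, the needed inequalities are $\bbP(\Delta X \geq k) \leq \bbP(\Delta Y \geq k) \leq \bbP(\Delta Z \geq k)$ for every $k \geq 1$. Summing the first and second lines of (\ref{qijk}) yields the case $k = 1$; the second line alone gives $k = 2$; and for $k \geq 3$ the leftmost tail is identically zero while the comparison between $\Delta Y$ and $\Delta Z$ again reduces to the second line. Next, for the inductive step, having built $X_i, Y_i, Z_i, \dots, X_n, Y_n, Z_n$ on some $(\Omega,\bbF,\bbP)$ with $X_k \leq Y_k \leq Z_k$ a.s.\ for $i \leq k \leq n$, I enlarge the space by an independent $U_{n+1} \sim \mathrm{Uniform}(0,1)$ and set
\begin{align*}
X_{n+1} &= X_n + F^{-1}_{X,n+1}(U_{n+1};X_n),\\
Y_{n+1} &= Y_n + F^{-1}_{Y,n+1}(U_{n+1};Y_n),\\
Z_{n+1} &= Z_n + F^{-1}_{Z,n+1}(U_{n+1};Z_n),
\end{align*}
where $F_{X,n+1}(\,\cdot\,;x)$ is the CDF of the increment distribution $p_{n+1}(x^n, x+\,\cdot\,)$, and analogously for the other two. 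The inverse-CDF fact gives the correct conditional marginals $p_{n+1}$, $q_{n+1}$, $r_{n+1}$, and the stochastic ordering of the increments yields $F^{-1}_{X,n+1} \leq F^{-1}_{Y,n+1} \leq F^{-1}_{Z,n+1}$ pointwise in $U_{n+1}$, so the ordering is preserved at time $n+1$.

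The main (and essentially only) obstacle is the algebraic verification of the tail inequalities from (\ref{qijk}); everything else is the standard monotone-coupling machinery. The infinite-time statement (\ref{coupling1}) then follows by intersecting countably many almost sure events, with the base case supplied by the hypothesis $\bbP(X_i = Y_i = Z_i) = 1$.
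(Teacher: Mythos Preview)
Your proposal is correct and follows essentially the same route as the paper: both arguments boil down to verifying, from (\ref{pijk}), (\ref{rijk}) and (\ref{qijk}), that the one-step kernels are stochastically ordered whenever $x_n\le y_n\le z_n$, and then invoking a monotone coupling. The only difference is cosmetic: the paper checks the ordering directly on the next-state kernels $K_{n+1}^p,K_{n+1}^q,K_{n+1}^r$ evaluated at upper sets and then appeals to Theorem~2 of \cite{StIneq} for the existence of the coupled chains, whereas you phrase the comparison in terms of increment tails and build the coupling explicitly via the inverse-CDF construction.
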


			\begin{proof}
				We seek to prove a stochastic ordering for $K_{n+1}^p(x^{n},\cdot)$, $K_{n+1}^q(y^{n},\cdot)$ and $K_{n+1}^r(z^{n},\cdot)$.
				For this aim, take the set $B\in \bbF$ such that $B:=\{b,b+1,\dots\}$, where $b$ is any integer $b\geq m$. Then,
				\begin{align}\label{kernelp}
					K_{n+1}^p(x^{n},B)=\sum_{j\geq b}p_{n+1}(x^n,j)=
					\begin{cases}
						1,& \quad b\leq x_n, \\
						\frac{x_n}{2(n+1)}+(c_2+b_2)\left(\frac{x_n}{n}\right)^2, & \quad b=x_n+1, \\
						b_2\left(\frac{x_n}{n}\right)^2, & \quad b=x_n+2,\\
						0, & \quad b\geq x_n+3,
					\end{cases}
				\end{align}
				\begin{align}\label{kernelq}
					K_{n+1}^q(y^{n},B)=\sum_{j\geq b}q_{n+1}(y^n,j)=
					\begin{cases}
						1,&  \quad b\leq y_n, \\
						\sum_{i=\ell}^m q_{n+1}(y^n,y_n+\ell), & \quad b=y_n+\ell, \\ & \qquad \ell=1,\dots,m-1, \\
						q_{n+1}(y^n,y_n+m),& \quad b=y_n+m,\\
						0, & \quad b\geq y_n+m+1,
					\end{cases}
				\end{align}
				and
				\begin{align}\label{kernelr}
					K_{n+1}^r(z^{n},B)=\sum_{j\geq b}r_{n+1}(z^n,j)=
					\begin{cases}
						1,& \quad b\leq z_n, \\
						\frac{z_n}{2n}+(c_1+b_1)\left(\frac{z_n}{n}\right)^2, & \quad b=z_n+1, \\
						b_1\left(\frac{z_n}{n}\right)^2, & \quad b=z_n+2,\\
						0, & \quad b\geq z_n+3.
					\end{cases}
				\end{align}			 
				Since $X_i=Y_i=Z_i$ a.s., $b_2<b_1$, and $c_2<c_1$, then by (\ref{pijk}), (\ref{rijk}) and (\ref{qijk}),
				we obtain that $x_n\leq y_n\leq z_n$, $n\geq i$.
				Thus comparing the three kernels (\ref{kernelp}), (\ref{kernelq}) and (\ref{kernelr}),
				we have
				\begin{align*}
					K_{n+1}^p(x^n,B)\leq K_{n+1}^q(y^n,B)\leq K_{n+1}^r(z^n,B). 
				\end{align*}
				Equivalently, $K_{n+1}^p(x^n,\cdot)$ is stochastically smaller than $K_{n+1}^q(y^n,\cdot)$, and the latter is in turn
				stochastically smaller than $K_{n+1}^r(z^n,\cdot)$, whenever $x^n\leq y^n\leq z^n$. 
				To show that (\ref{coupling1}) holds we finally apply Theorem 2 in \cite{StIneq}.
			\end{proof}

			Let us consider the process $\{d(v_i,n(m+1))\}_{n\geq i}$ and its probability space
			$(\Omega,\mathcal{A}, \bbP)$. On the same probability space let us define two Markov processes  
			$\{d^1(v_i,n(m+1))\}_{n\geq i}$ and $\{d^2(v_i,n(m+1))\}_{n\geq i}$, with their
			initial states such that $\bbP\big(d^1(v_i,i(m+1))=d(v_i,i(m+1))=d^2(v_i,i(m+1))\big)=1$, and
			transition probabilities given by
			(\ref{rijk}) and (\ref{pijk}), respectively.
	
			\begin{cor}
				\label{cor2}
				For $i$ sufficiently large, there exist versions   $\{\tilde d^1(v_i,n(m+1))\}_{n\geq i},\{\tilde d^2(v_i,n(m+1))\}_{n\geq i}$,
				$\{\tilde d(v_i,n(m+1))\}_{n\geq i}$ of the processes $\{d^1(v_i,n(m+1))\}_{n\geq i}$,
				$\{d^2(v_i,n(m+1))\}_{n\geq i}$ and $\{d(v_i,n(m+1))\}_{n\geq i}$, respectively, such that 
				\begin{align}
					\label{coupling}
					\bbP[\tilde d^2(v_i,n(m+1))\leq  \tilde d(v_i,n(m+1))\leq \tilde d^1(v_i,n(m+1)), n=i,i+1,\dots]=1. 
				\end{align} 
			\end{cor}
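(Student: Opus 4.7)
The plan is to apply Lemma \ref{teo2.1} inductively, casting the BA degree process $\{d(v_i,n(m+1))\}_{n\geq i}$ in the role of the middle process governed by the kernel $q_{n+1}$.

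First, I would check that the one-step transitions of the degree process fit the framework of (\ref{qijk}). By Lemma \ref{lema1}, the conditional probabilities $\bbP[d(v_i,(n+1)(m+1))=k+\ell\mid d(v_i,n(m+1))=k]$ are squeezed, for $\ell=1$ by (\ref{transitiond}) and for $\ell\geq 2$ by (\ref{transitiondn}), exactly between the corresponding probabilities under the kernels $p_{n+1}$ and $r_{n+1}$ defined in (\ref{pijk}) and (\ref{rijk}). Therefore, taking $q_{n+1}(y^n,y_n+\ell)$ to be the one-step law of $d(v_i,\cdot)$ produces a kernel that satisfies the stochastic domination relations (\ref{qijk}) with respect to $p_{n+1}$ and $r_{n+1}$.

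Starting from the common initial state at $n=i$, where by hypothesis $\tilde d^1(v_i,i(m+1))=\tilde d(v_i,i(m+1))=\tilde d^2(v_i,i(m+1))$ almost surely, the inductive step is exactly Lemma \ref{teo2.1}: assuming coupled versions have been constructed at time $n(m+1)$ with $\tilde d^2\leq\tilde d\leq\tilde d^1$, the lemma delivers random variables at time $(n+1)(m+1)$ having the prescribed conditional distributions $p_{n+1}$, $q_{n+1}$, $r_{n+1}$ and preserving the pointwise ordering almost surely. Iterating this on the product space $(E^{\infty},\bbF^{\infty})$ introduced before Lemma \ref{teo2.1} yields three processes satisfying (\ref{coupling}) simultaneously for all $n\geq i$.

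The main obstacle is ensuring that the coupled middle process $\tilde d$ has the same law as the original BA degree process, not merely the correct one-step conditionals. This requires $\{d(v_i,n(m+1))\}_{n\geq i}$ to be Markov given its own history, which follows from the BA attachment rule: conditional on $d(v_i,n(m+1))=k$, the probability that $v_i$ acquires a new incoming edge during $(n(m+1),(n+1)(m+1)]$ depends only on $k$ and on the deterministic total degree of the graph at that time. Hence the finite-dimensional distributions of $\tilde d$ are fully determined by the one-step kernels, so $\tilde d$ has the same law as $d(v_i,\cdot)$ (and analogously $\tilde d^1$, $\tilde d^2$ are versions of $d^1$, $d^2$), and (\ref{coupling}) is established.
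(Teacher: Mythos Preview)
Your proposal is correct and follows the same approach as the paper, which simply states that the corollary ``immediately follows by applying Lemma~\ref{teo2.1}'' to the three processes. You supply the details the paper leaves implicit: that the BA one-step transitions from Lemma~\ref{lema1} serve as the middle kernel $q_{n+1}$ satisfying (\ref{qijk}), and that the degree process is Markov in its own filtration so the coupled version genuinely has the BA law. One minor remark: the phrase ``applying Lemma~\ref{teo2.1} inductively'' is slightly off, since Lemma~\ref{teo2.1} (via Theorem~2 of \cite{StIneq}) already delivers the ordered coupling for all $n\geq i$ at once; a single invocation suffices rather than an inductive iteration.
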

			
			\begin{proof}
				It immediately follows by applying Lemma \ref{teo2.1} to
				$\{d^1(v_i,n(m+1))\}_{n\geq i}$, $\{d^2(v_i,n(m+1))\}_{n\geq i}$, and $\{d(v_i,n(m+1))\}_{n\geq i}$. 
			\end{proof}
			\begin{lem}
				\label{lema1conv}
				Let $\{\tilde d(v_i,n(m+1))\}_{n\geq i}$, be the  process of Corollary \ref{cor2}, $w\in\bbR^+$ and let
				$z(i,w):\bbN^*\times\bbR^+\rightarrow\bbN^*$ be a function such that $c(w):=\lim_{i\rightarrow\infty}z(i,w)/i$ exists finite,
				where $c(w):\bbR^+\rightarrow \bbR^+$ is an increasing function in $w$. Let $b\geq1$ and $w_1<w_2<\dots<w_{b}$ be positive
				real numbers.				
				Then, the random vector
				\begin{align*}
					\left(\tilde d(v_i,(i+z(i,w_1))(m+1)),\dots,\tilde d(v_i(i+z(i,w_b))(m+1))\right)
				\end{align*}
				converges in distribution to $\bigl(N_{1/2}^m(\log(1+c(w_1))),\dots,N_{1/2}^m(\log(1+c(w_b)))\bigr)$
				as $i\rightarrow\infty$. Here $N_{1/2}^m(T)$, $T\geq0$, is the number of individuals
				at time $T$ in a Yule process with parameter $1/2$ and $m$ initial individuals. 
			\end{lem}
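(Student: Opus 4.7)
The plan is to exploit the pointwise sandwich $\tilde d^2 \le \tilde d \le \tilde d^1$ from Corollary \ref{cor2} and transfer finite-dimensional convergence from the bounding Markov processes to the non-Markov process $\tilde d$.

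First I would compute the limiting joint distribution of $\tilde d^2$ evaluated at the rescaled times $(i + z(i, w_\ell))(m+1)$, $\ell = 1, \dots, b$. Since $\tilde d^2$ is Markov with transitions (\ref{pijk}), the joint law factors into the initial mass at $\tilde d^2(v_i, i(m+1))$ (equal to $m$ with probability $1 - O(1/i)$ by (\ref{initCond})) and a product of transition probabilities between consecutive pairs. For each such transition from $(k_{\ell-1}, n_{\ell-1})$ to $(k_\ell, n_\ell)$, where $n_\ell = i + z(i, w_\ell)$, I would analyze the pure-birth probability $\bbP[\tilde d^2(v_i, n_\ell(m+1)) = k_\ell \mid \tilde d^2(v_i, n_{\ell-1}(m+1)) = k_{\ell-1}]$ by induction on $k_\ell - k_{\ell-1}$, using that the per-step increment probability is $k/(2n) + O(1/n^2)$. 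In the limit, the negative binomial structure emerges:
\begin{align*}
\binom{k_\ell - 1}{k_\ell - k_{\ell-1}} \rho_\ell^{k_{\ell-1}/2} \bigl(1 - \rho_\ell^{1/2}\bigr)^{k_\ell - k_{\ell-1}}, \qquad \rho_\ell := \frac{1 + c(w_{\ell-1})}{1 + c(w_\ell)},
\end{align*}
where the hypothesis $z(i, w)/i \to c(w)$ is used to evaluate $n_{\ell-1}/n_\ell \to \rho_\ell$. This matches exactly the Yule transition at rate $1/2$ over the increment $\ln(1 + c(w_\ell)) - \ln(1 + c(w_{\ell-1}))$, so the product over $\ell$ reproduces the right-hand side of Theorem \ref{Teo1a}. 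The same analysis applied to $\tilde d^1$ with transitions (\ref{rijk}) gives the identical limit, since the two kernels agree to leading order and differ only at order $O(1/n^2)$.

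To upgrade this joint convergence to $\tilde d$, I would note that both bounding processes have the same limiting marginal mean $m\sqrt{1 + c(w_\ell)}$. Combined with a uniform second-moment bound on $\tilde d^1$, obtainable via a Gronwall-type recursion for $\bbE[(\tilde d^1(v_i, n(m+1)))^2]$, this gives uniform integrability and hence convergence of means. Since $\tilde d^1_n - \tilde d^2_n \ge 0$ a.s.\ by (\ref{coupling}) and both means converge to the same value, one obtains $\tilde d^1(v_i, n_\ell(m+1)) - \tilde d^2(v_i, n_\ell(m+1)) \to 0$ in $L^1$ and hence in probability. A union bound over the $b$ times yields $\bbP[\tilde d^2(v_i, n_\ell(m+1)) = \tilde d(v_i, n_\ell(m+1)) = \tilde d^1(v_i, n_\ell(m+1)), \: \forall \ell] \to 1$, so $\tilde d$ inherits the Yule limit.

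The principal obstacle is the careful bookkeeping of the $O(1/n^2)$ corrections in (\ref{pijk})--(\ref{rijk}): one must verify that over the $O(i)$ steps separating consecutive rescaled times the probability of at least one jump of size $\ge 2$ vanishes, and that the cumulative multiplicative error to the leading-order transition factor $k/(2n)$ is $o(1)$. A secondary technicality is the uniform integrability needed for the mean-convergence step of the sandwich; a direct second-moment recursion based on (\ref{rijk}) handles this.
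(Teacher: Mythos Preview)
Your proposal is correct, but the paper takes a somewhat different and lighter route in both halves of the argument.

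For the convergence of the bounding processes $\tilde d^1$ and $\tilde d^2$, the paper does not compute multi-step transition probabilities directly. Instead it performs a time change: setting $T_i^x=\sum_{n=i}^{i+x-1}1/n$ (and analogously $\mathbb{T}_i^y$ with $1/(n+1)$), it rewrites the one-step kernels (\ref{pijk}) and (\ref{rijk}) in the form $\tfrac{k}{2}h_n+o(h_n)$ with $h_n=T_i^x-T_i^{x-1}$. This immediately identifies the rescaled chains as discrete approximations to a Yule process with rate $1/2$, and the harmonic-sum asymptotic $T_i^{z(i,w_\ell)}\to\ln(1+c(w_\ell))$ delivers the limiting times. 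Your inductive computation of the negative-binomial limit achieves the same end, but the paper's time change absorbs the $O(1/n^2)$ bookkeeping you flag as the principal obstacle into the $o(h_n)$ term of the infinitesimal generator, so no separate accounting over $O(i)$ steps is needed.

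For the sandwich step, the paper bypasses moments entirely: from the almost-sure ordering (\ref{coupling}) it reads off the joint CDF inequalities
\[
\bbP\bigl(\tilde d^1\le k_\ell,\ \ell=1,\dots,b\bigr)\le \bbP\bigl(\tilde d\le k_\ell,\ \ell=1,\dots,b\bigr)\le \bbP\bigl(\tilde d^2\le k_\ell,\ \ell=1,\dots,b\bigr),
\]
and since both extremes converge to the same Yule CDF, so does the middle. Your route via mean convergence, uniform integrability, and $L^1$-collapse of $\tilde d^1-\tilde d^2$ is valid but heavier than necessary; the CDF squeeze requires no second-moment recursion and no integer-valued/union-bound step.
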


			\begin{proof}
				In order to prove the convergence, we make use of the processes  $\{\tilde d^1(v_i,n(m+1))\}_{n\geq i}$
				and $\{\tilde d^2(v_i,n(m+1))\}_{n\geq i}$ and of their behaviour as $i$ goes to infinity.
				We focus now only on the process  $\{\tilde d^1(v_i,n(m+1))\}_{n\geq i}$ as the case of $\{\tilde d^2(v_i,n(m+1))\}_{n\geq i}$
				can be treated analogously.

				Let $i$ be fixed and let  $T_i^0=0$. 
				For every $x\geq 1$ we introduce the times $T_i^x=\sum_{n=i}^{i+x-1}1/n$. In this way we obtain a partition of $(0,T_i^x]$, 
				\begin{align}\label{partition}
					(0,T_i^x]=(0,T_i^1]\cup(T_i^1,T_i^2]\cup\dots\cup(T_i^{x-1},T_i^{x}].
				\end{align}
				The intervals of this partition have lengths $h_n=1/n$, $n=i,i+1,\dots,i+x-1$
				(see Figure \ref{ScalTime}).
				\begin{figure}
					\center
					\includegraphics[scale=0.3]{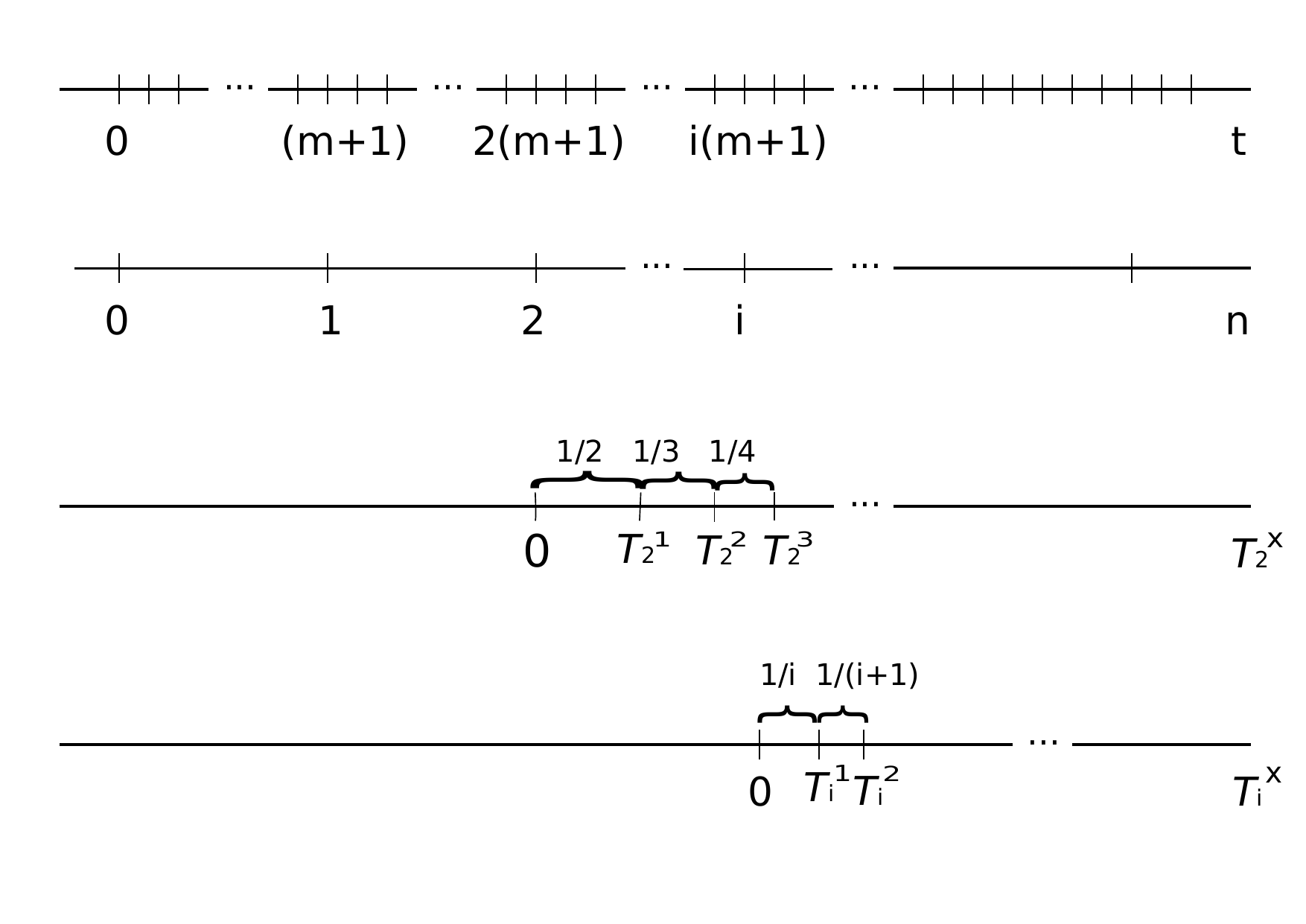}
					\caption{\label{ScalTime}The first line represents the time axis of the BA model. The second line shows the number of
						complete vertices in the BA model. The third and fourth lines correspond to the partitions
						of $(0,T_2^x]$ and $(0,T_i^x]$, respectively.}
				\end{figure}

				We introduce the point process $\{\textswab{N}^{1,i}(T)\}_{T\geq0}$,
				jumping at times $T_i^x$, $x\geq1$, and determined by the following rules:
				\begin{enumerate}
					\item At time $T=0$ the process starts with an initial random number
						of individuals supported on $\{m,m+1,\dots,2m\}$ and with distribution only asymptotically degenerate on $m$, i.e. 
						\begin{equation}\label{initialcond1}
							\bbP(\textswab{N}^{1,i}(0) \ne m)=1-\prod_{\ell=2}^{m+1}\Big(1-\frac{1}{2(mi+\ell-1)}\Big)=O(1/i).
						\end{equation} 
					\item The transition probabilities of this  point process  coincide with (\ref{rijk}) when $n=i+x-1$ and $z_n=k$, for every fixed $k \in \mathbb{N}^*$, $k \ge m$, and $x\geq 1$. We write these probabilities here using asymptotic notation. For each $x\geq 1$, let $h_{i,x}=T_i^{x}-T_i^{x-1}=1/(i+x-1)$, then 
						we can write
						\begin{align}
							\label{NCvuno}
							\bbP[\textswab{N}^{1,i}(T_i^{x})=k+\ell\mid \textswab{N}^{1,i}(T_i^{x-1})=k]=
							\begin{cases}
								\frac{k}{2} h_{i,x}+o(h_{i,x}), & if \quad \ell=1, \\
								o(h_{i,x}), & if\quad \ell= m,\\
								1-\frac{k}{2} h_{i,x}+o(h_{i,x}), & if\quad  \ell=0,\\
								0, & \text{otherwise}.
							\end{cases}
						\end{align}
		        \end{enumerate}
				Observe that the sample paths  of $\{\textswab{N}^{1,i}(T)\}_{T\geq 0}$ and those of $\{\tilde d^1(v_i,n(m+1))\}_{n\geq i}$
				are non-decreasing right-continuous and integer-valued step functions. However, the lengths of the steps in
				$\{\tilde d^1(v_i,n(m+1))\}_{n\geq i}$ always equal unity, while those of $\{\textswab{N}^{1,i}(T)\}_{T\geq 0}$ admit
				the rational values $h_{i,x}$.
				
				Using the well-known relation
				$\sum_{n=1}^M 1/n=\log(M)+\gamma+O(1/M)$, where $\gamma$ is the Euler--Mascheroni constant,
				we have that
				\begin{align*}
					T_i^{x}=\log\Big(1+\frac{x}{(i-1)}\Big)+O(1/i),
				\end{align*}
				so, if $z(i,w_{\ell})\geq1$ for $\ell=1,\dots,b$,
				\begin{align}\label{Tlx} 
					T_i^{z(i,w_{\ell})}=\log\Big(1+\frac{z(i,w_{\ell})}{(i-1)}\Big)+O(1/i)\rightarrow \log (1+c(w_{\ell})),
				\end{align}
				as $i\rightarrow\infty$.

				Analogously, we introduce the times  $\bbT_i^y=\sum_{n=i}^{i+y-1}1/(n+1)$, $y\geq 1$, and $\bbT_i^0=0$. 
				We divide $(0,\bbT_i^y]$ into $y$ disjoint subintervals of lengths $h_{i,y}^*=1/(i+y)$, 
				\begin{align*}
					(0,\bbT_i^y]=(0,\bbT_i^1]\cup(\bbT_i^1,\bbT_i^2]\cup\dots\cup(\bbT_i^{y-1},\bbT_i^{y}].
				\end{align*}
				We introduce the point
				process $\{\textswab{N}^{2,i}(T)\}_{T\geq0}$, jumping at times $\bbT_i^y$, $y\geq1$, and determined by the following properties:
				\begin{enumerate} 
					\item This process starts with an initial random number of individuals supported on $\{m,m+1,\dots,2m\}$ and such that
						\begin{equation}\label{initialcond2}
							\bbP(\textswab{N}^{2,i}(0)\ne m)=1-\prod_{\ell=2}^{m+1}\Big(1-\frac{1}{2(mi+\ell-1)}\Big)=O(1/i).
						\end{equation}
					\item  Its transition probabilities  coincide with
						(\ref{pijk}) when $n=i+y-1$ and $x_n=k$, for every fixed $k \in \mathbb{N}^*$, $k \ge m$, and $y\geq 1$. Hence, for each $y\geq 1$, $h_{i,y}^*=\bbT_i^{y}-\bbT_i^{y-1}=1/(i+y)$, we write
						\begin{align}
							\label{NCvdos}
							\bbP[\textswab{N}^{2,i}(\mathcal{T}_i^{y})=k+\ell\mid \textswab{N}^{2,i}(\mathcal{T}_i^{y-1})=k]=
							\begin{cases}
								\frac{k}{2} h_{i,y}^*+o(h_{i,y}^*), & \ell=1, \\
								o(h_{i,y}^*), & \ell= 2,\\
								1-\frac{k}{2} h_{i,y}^*+o(h_{i,y}^*), & \ell=0,\\
								0, & \text{otherwise}.
							\end{cases}
						\end{align}
				\end{enumerate}
				Then we get that			
				$\bbT_i^{y}=\log(1+y/i)+O(1/i)$.
				Therefore, if $z(i,w_{\ell})\geq1$ for $\ell=1,\dots,b$,
				\begin{align}\label{Tly}
					\bbT_i^{z(i,w_{\ell})}=\log\Big(1+\frac{z(i,w_{\ell})}{i}\Big)+O(1/i)\rightarrow \log (1+c(w_{\ell})),
				\end{align}
				as $i\rightarrow\infty$.

				Note that $\textswab{N}^{1,i}(T_i^x)$ and $\textswab{N}^{2,i}(\mathcal{T}_i^{y})$  have the same law and  initial condition as
				$\tilde d^1(v_i,(i+x)(m+1))$ and $\tilde d^2(v_i,(i+y)(m+1))$, $x,y\geq0$, respectively.
				In addition, by (\ref{initialcond1}) and (\ref{initialcond2}), these processes start with $m$ initial individuals, as $i\rightarrow\infty$.
				
				We emphasize that by (\ref{Tlx}) and (\ref{Tly}) 
				both $T_i^{z(i,w_{\ell})}$ and $\bbT_i^{z(i,w_{\ell})}$ converge to the same time $T_{\ell}=\ln(1+c(w_{\ell}))$, $\ell=1,\dots,b$.
				Moreover, as $i$ increases, (\ref{NCvuno}) and (\ref{NCvdos})
				are closer and closer to the infinitesimal transition probabilities of a Yule process (see (\ref{BirtPro}))
				with intensity $1/2$. 
				Since the transition probabilities and the initial condition determine uniquely the finite-dimensional
				distributions of a Markov process, then as $i\rightarrow\infty$, the finite-dimensional distribution of $\textswab{N}^{1,i}(T_i^{x})$ and $\textswab{N}^{2,i}(\mathcal{T}_i^{y})$, converge to the finite-dimensional distribution of a Yule process with intensity $1/2$. In other words,
				\begin{align}\label{N1Y1}
					\left(\textswab{N}^{1,i}(T_i^{z(i,w_{1})}),\dots,\textswab{N}^{1,i}(T_i^{z(i,w_{b})})\right)&\rightarrow
					\left(N_{1/2}^m(T_1),\dots, N_{1/2}^m(T_b)\right),
				\end{align}
				in distribution, as $i\rightarrow\infty$,  where $N_{1/2}^m(T)$ is the number of individuals of a Yule
				process at time $T$ with intensity $1/2$ and initial population size equal to $m$.
				Analogously,
				\begin{align}\label{N2Y2}
					\left(\textswab{N}^{2,i}(\mathcal{T}_i^{z(i,w_{1})}),\dots,\textswab{N}^{2,i}(\mathcal{T}_i^{z(i,w_{b})})
					\right)&\rightarrow \left(N_{1/2}^m(T_1),\dots, N_{1/2}^m(T_b)\right), 
				\end{align}
				in distribution. To rigorously prove (\ref{N1Y1}) and (\ref{N2Y2}) it is enough to focus on (\ref{N1Y1}) only, as (\ref{N2Y2}) follows in a similar way.
				Recall that $\{\textswab{N}^{1,i}(T)\}_{T\geq0}$ starts with an initial random integer number of individuals $R \in \{m,m+1,\dots,2m\}$. Let $\{\textswab{N}_m^{1,i}(T)\}_{T\geq0}$ denote the  process $\{\textswab{N}^{1,i}(T)\}_{T\geq0}$ conditioned to $\textswab{N}^{1,i}(0)=m$. Set $L_{i,m}=0$, while for $k>m$ define
				\begin{equation}\label{Tik}
					L_{i,k}=\min\{T\geq L_{i,k-1} \colon \textswab{N}_m^{1,i}(T)>k-1\}.
				\end{equation}
				Since the jumps of the process $\{\textswab{N}_m^{1,i}(T)\}$ are of size either 1 or $m$, then observe the following: if at time $L_{i,k}$ the jump is of size 1, i.e., if $\textswab{N}_m^{1,i}(L_{i,k})-\textswab{N}_m^{1,i}(L_{i,k-1})=1$, then  $\textswab{N}_m^{1,i}(L_{i,k})=k$. On the other hand, if $\textswab{N}_m^{1,i}(L_{i,k})-\textswab{N}_m^{1,i}(L_{i,k-1})=m$, then it follows that $\textswab{N}_m^{1,i}(L_{i,k})=k+m-1$, and  we have 
				\begin{equation}\label{tikigual}
					L_{i,k}=L_{i,k+1}=\dots=L_{i,k+m-1}.
				\end{equation}
				Let now $U_{i,j}=L_{i,j+1}-L_{i,j}$, $j\geq m$. If (\ref{tikigual}) holds, 
				\begin{equation}\label{wikigual}
					U_{i,k}=U_{i,k+1}=\dots=U_{i,k+m-1}=0.
				\end{equation}
				Note that we can write 
				\begin{equation}\label{tikUij}
					L_{i,k}=\sum_{j=m}^{k-1}U_{i,j}, \qquad k>m.
				\end{equation}
				In addition  if we consider only the times $L_{i,k}$,  such that $L_{i,k}<L_{i,k+1}$, $k\geq m$, then we can reconstruct $\textswab{N}_m^{1,i}(T)$ for every $T\geq0$:
				\begin{equation}\label{Ntikreconst}
					\textswab{N}_m^{1,i}(T)=k, \quad\text{ if }\quad L_{i,k}\leq T < L_{i,k+1}.
				\end{equation}
				Now we are going to write the finite-dimensional distributions of the original process $\{\textswab{N}^{1,i}(T)\}_{T\geq0}$. Consider the times $0=T_0<T_1< T_2<\dots < T_b$, where $T_{\ell}=\ln(1+c(w_{\ell})$, $\ell=1,\dots,b$, and let $c\in \bbR^b$. Taking into account the initial position of the process, we see that the random vector $\left(\textswab{N}^{1,i}(T_1),\dots,\textswab{N}^{1,i}(T_b)\right)$ has, over $\bbR^b$, the joint distribution 
				\begin{align}\label{5.39}
					\bbP[\left(\textswab{N}^{1,i}(T_1),\dots,\textswab{N}^{1,i}(T_b)\right)= c]=\bbP[\left(\textswab{N}_m^{1,i}(T_1),\dots,\textswab{N}_m^{1,i}(T_b)\right)= c]+\varepsilon_1,
				\end{align}
				where $\varepsilon_1\leq O(1/i)$ by (\ref{initialcond1}).
				Let $m=k_0\leq k_1\leq\dots\leq k_b\in\bbN^*$. By the Markov property we obtain 
				\begin{align}\label{5.40}
					\bbP[\textswab{N}_m^{1,i}(T_{\ell})=k_{\ell},\ell=1,\dots,b]=\prod_{\ell=1}^b \bbP[\textswab{N}_m^{1,i}(T_{\ell})=k_{\ell}\mid \textswab{N}_m^{1,i}(T_{\ell-1})=k_{\ell-1}],
				\end{align}
				where $k_0=m$.  
				Observe that by (\ref{tikUij}) and (\ref{Ntikreconst}), we can write the conditional probabilities $\bbP[\textswab{N}_m^{1,i}(T_{\ell})=k_{\ell}\mid \textswab{N}_m^{1,i}(T_{\ell-1})=k_{\ell-1}]$ as follows. For $\ell=1$ we have
				\begin{align}\label{5.41}
					\bbP[\textswab{N}_m^{1,i}(T_{1})=k_{1}\mid \textswab{N}_m^{1,i}(0)=m]
					& = \bbP[L_{i,k_1}\leq T_1]-\bbP[L_{i,k_1+1}\leq T_1]\\
					& = \bbP\left[\sum_{j=m}^{k_1-1}U_{i,j}\leq T_1\right]-\bbP\left[\sum_{j=m}^{k_1}U_{i,j}\leq T_1\right],\notag
				\end{align}
				while for $\ell=2,\dots,b$,
				\begin{align}
					\bbP[\textswab{N}_m^{1,i}&(T_{\ell})=k_{\ell}\mid \textswab{N}_m^{1,i}(T_{\ell-1})=k_{\ell-1}] \\
					= {} &\bbP[L_{i,k_{\ell}}\leq T_{\ell}<L_{i,k_{\ell}+1}\mid L_{i,k_{\ell-1}}\leq T_{\ell-1}<L_{i,k_{\ell-1}+1}]\nonumber\\
					= {} &\bbP[L_{i,k_{\ell}}-L_{i,k_{\ell-1}+1}\leq T_{\ell}-T_{\ell-1}<L_{i,k_{\ell}+1}-L_{i,k_{\ell-1}}\mid \textswab{N}_m^{1,i}(T_{\ell-1})=k_{\ell-1}]\nonumber\\
					= {} & \bbP[L_{i,k_{\ell}}-L_{i,k_{\ell-1}+1}\leq T_{\ell}-T_{\ell-1}\mid \textswab{N}_m^{1,i}(T_{\ell-1})=k_{\ell-1}] \notag \\
					& -\bbP[L_{i,k_{\ell}+1}-L_{i,k_{\ell-1}}\leq T_{\ell}-T_{\ell-1}\mid \textswab{N}_m^{1,i}(T_{\ell-1})=k_{\ell-1}]\nonumber\\
					= {} &\bbP\left[\sum_{j=k_{\ell-1}+1}^{k_{\ell}-1}U_{i,j}\leq T_{\ell}-T_{\ell-1}\mid \textswab{N}_m^{1,i}(T_{\ell-1})=k_{\ell-1}\right] \notag \\
					& -\bbP\left[\sum_{j=k_{\ell-1}}^{k_{\ell}}U_{i,j}\leq T_{\ell}-T_{\ell-1}\mid \textswab{N}_m^{1,i}(T_{\ell-1})=k_{\ell-1}\right]. \notag
				\end{align}
				Observe also that if $\textswab{N}_m^{1,i}(T_{\ell-1})=k_{\ell-1}$, $\ell\geq1$,  then the random variable $U_{i,k_{\ell-1}}$ is strictly positive, while for $j>k_{\ell-1}$, $U_{i,j}\geq 0$. 
				We focus on the limit distribution only of $L_{i,k_1}=\sum_{j=m}^{k_1-1}U_{i,j}$ as the others follow similarly. 
				Recall that the process $\{\textswab{N}^{1,i}(T)\}_{T\geq0}$ jumps at times of the form $T_i^x=\sum_{n=1}^{i+x-1}1/n$, $x\geq1$, and let $z(i,w_1)$ be such that $\lim_{i\rightarrow\infty} T_i^{z(i,w_1)}=T_1=\ln(1+c(w_1))$, and hence   
				$|T_1-T_i^{z(i,w_1)}|<1/(i+z(i,w_1))$, for $i$ large enough.  Consider the interval $(0,T_1]\times\dots\times(0,T_1]$
				and the partition $(0,T_i^{z(i,w_1)}]\times\dots\times(0,T_i^{z(i,w_1)}]$ 
				in $\bbR^{k_1-m}$. Then  
				\begin{align}\label{jointdist}
					\bbP\left[\sum_{j=m}^{k_1-1}U_{i,j}\leq T_1\right]
					&\sim\bbP[(U_{i,m},\dots,U_{i,k_1-1})\in B_{T_i^{z(i,w_1)}}]\\
					&=\sum_{B_{T_i^{z(i,w_1)}}}p(u_m,\dots,u_{k_1-1}), \notag
				\end{align}
				where $B_{T_i^{z(i,w_1)}}=\{(u_m,\dots,u_{k_1-1}):u_m,+\dots,+u_{k_1-1}\leq T_i^{z(i,w_1)}\}$ and $p(u_m,\dots,u_{k_1-1})$ is the joint probability function of $(U_{m},\dots,U_{i,k_1-1})$.
				By conditioning, the right side of (\ref{jointdist}) can be written as  
				\begin{align}\label{5.44}
					\sum_{u_m=1}^{z(i,w_1)}\sum_{u_{m+1}=u_m}^{z(i,w_1)}&\dots\sum_{u_{k_1-2}=u_{k_1-3}}^{z(i,w_1)}\bbP\left[U_{i,m}=T_i^{u_m}\right]\bbP\left[U_{i,m+1}=T_i^{u_{m+1}}-T_i^{u_m}\mid U_{i,m}=T_i^{u_m}\right]\\
					&\times \dots \times\bbP\left[U_{i,k_1-2}=T_i^{u_{k_1-2}}-T_i^{u_{k_1-3}}\mid \sum_{j=m}^{k_1-3}U_{i,j}=T_i^{k_1-3}\right]\nonumber\\
					&\times \bbP\left[U_{i,k_1-1
					}\leq T_i^{z(i,w_1)}-T_i^{u_{k_1-2}}\mid \sum_{j=m}^{k_1-2}U_{i,j}=T_i^{k_1-2}\right]. \notag
				\end{align}
				By (\ref{NCvuno}) and since the event $\{U_{i,m}=T_i^{u_m}\}$ means that at times $T_i^1,\dots,T_i^{u_m-1}$, the process $\textswab{N}_m^{1,i}(T)$ did not jump, but at time $T_i^{u_m}$ it did, 
				\begin{align}
					\label{5.45}
					\bbP[U_{i,m}=T_i^{u_m}] = & \left[\prod_{\ell=1}^{u_m-1}  \left(
					1-\frac{m}{2}h_{i,\ell}+o(h_{i,\ell})\right)\right]
					\left(\frac{m}{2}h_{i,u_m}+o(h_{i,u_m})\right)\\
					& = \frac{m}{2}h_{i,u_m}\exp\left(\sum_{\ell=1}^{u_m-1}\ln\left(1-\frac{m}{2}
					h_{i,\ell}+o(h_{i,\ell})\right)\right)+o(h_{i,u_m}). \notag
				\end{align}
				Using Taylor expansion for $\ln(1-y)$ and $e^y$, and rearranging the terms we obtain that
				\begin{align}\label{5.46}
					\bbP[U_{i,m}=T_i^{u_m}] =\frac{m}{2}h_{i,u_m}\exp\left(-\frac{m}{2}T_i^{u_m}\right)+\text{Err}_1,
				\end{align}
				where $\text{Err}_1\leq O(h_{i,u_m}^2)$.
				Next, we calculate $\bbP[U_{i,j}=T_i^{u_j}-T_i^{u_{j-1}}\mid L_{i,j}=T_i^{u_{j-1}}]$, $j=m+1,\dots,k_1-1.$ Let $J_{i,j}$ be the size of the jump at time $L_{i,j}$. By (\ref{tikigual}) and (\ref{wikigual}), if $J_{i,j}>1$, then $U_{i,j}=0$, otherwise, if $J_{i,j}=1$ then $U_{i,j}>0$.  In addition, by (\ref{NCvuno}) and (\ref{Tik}) we have that for every $x\geq 1$,
				\begin{align}\label{Jij}
					\bbP[J_{i,j}>1\mid L_{i,j}=T_i^x] & = \bbP[\textswab{N}_m^{1,i}(T_i^x)=j+m-1\mid \textswab{N}_m^{1,i}(T_i^{x-1})=j-1] =  o(h_{i,x}).
				\end{align}
				Thus, conditioning on $J_{i,j}$ and using (\ref{Jij}) we get
				\begin{align}\label{5.48}
					\bbP[U_{i,j}=T_i^{u_j}&-T_i^{u_{j-1}}\mid L_{i,j}=T_i^{u_{j-1}}]  \\
					& = \bbP[U_{i,j}=T_i^{u_j}-T_i^{u_{j-1}}\mid L_{i,j}=T_i^{u_{j-1}}, J_{i,j}=1] + \text{Err}_2, \notag
				\end{align}
				where $\text{Err}_2\leq o(h_{i,u_j})$. Observe that the conditional event $\{ U_{i,j}=T_i^{u_j}-T_i^{u_{j-1}}\mid L_{i,j}=T_i^{u_{j-1}}, J_{i,j}=1\}$ means that at times $T_i^{u_{j-1}+1},T_i^{u_{j-1}+2},\dots,T_i^{u_{j}-1}$, the process $\textswab{N}_m^{1,i}(T)$ did not jump, but at time $T_i^{u_{j}}$ it did. Therefore,  by (\ref{NCvuno}) and applying similar arguments to those used to obtain  (\ref{5.45}) and (\ref{5.46}) we have
				\begin{align}\label{5.49}
					\bbP[U_{i,j}=T_i^{u_j}-T_i^{u_{j-1}}&\mid L_{i,j}=T_i^{u_{j-1}}, J_{i,j}=1]\\
					&=\left[\prod_{\ell=u_{j-1}+1}^{u_j-1}\left(1-\frac{j}{2}h_{i,\ell}+o(h_{i,\ell})\right)\right]\left(\frac{j}{2}h_{i,u_j}+o(h_{i,u_j})\right)\nonumber\\
					&=\frac{j}{2}h_{i,u_j} \exp\left[-\frac{j}{2}(T_i^{u_j}-T_i^{u_{j-1}})\right]+\text{Err}_3, \notag
				\end{align}
				where $\text{Err}_3\leq O(h_{i,u_j}^2)$. Thus, by (\ref{5.48}) and (\ref{5.49})
				\begin{align}\label{5.50}
					\bbP[U_{i,j}=T_i^{u_j}-T_i^{u_{j-1}}\mid L_{i,j}=T_i^{u_{j-1}}]=\frac{j}{2}h_{i,u_j} \exp\left[-\frac{j}{2}(T_i^{u_j}-T_i^{u_{j-1}})\right]+\text{Err}_4,  
				\end{align}	
				where $\text{Err}_4\leq O(h_{i,u_j}^2)$.
				Following these same steps we can also find that
				\begin{align}\label{5.51}
					\bbP[U_{i,j}>T_i^{u_j}-T_i^{u_{j-1}}\mid L_{i,j}=T_i^{u_{j-1}}]= \exp\left[-\frac{j}{2}(T_i^{u_j}-T_i^{u_{j-1}})\right]+\text{Err}_5,  
				\end{align}
				where $\text{Err}_5\leq O(h_{i,u_j}^2)$.
				Now, by substituting (\ref{5.46}), (\ref{5.50}) and (\ref{5.51}) in (\ref{5.44}) we arrive at
				\begin{align}\label{riemman}
					\bbP\left[\sum_{j=m}^{k_1-1}U_{i,j}\leq T_1\right]&\sim \sum_{u_m=1}^{z(i,w_1)}\sum_{u_{m+1}=1}^{z(i,w_1)}\dots\sum_{u_{k_1-2}=1}^{z(i,w_1)}h_{i,u_m}h_{i,u_{m+1}}\frac{m}{2}\exp\left[-\frac{m}{2}T_i^{u_m}\right]\\
					&\times \frac{(m+1)}{2}\exp\left[\frac{-(m+1)}{2}(T_i^{u_{m+1}}-T_i^{u_m})\right]\mathbbm{1}_{\{T_i^{u_{m+1}} \geq T_i^{u_m}\}}\nonumber\\
					&\times \dots \times h_{i,u_{k_1-2}}\frac{(k_1-2)}{2}\exp\left[\frac{-(k_1-2)}{2}(T_i^{u_{k_1-2}}-T_i^{u_{k_1-3}})\right]\nonumber\\
					&\times \left[1-\exp\left(\frac{-(k_1-1)}{2}(T_i^{z_{i,w_1}}-T_i^{u_{k_1-2})}\right)\right] \mathbbm{1}_{\{T_i^{z_{i,w_1}} \geq T_i^{u_{k_1-2}}\}}
					+\text{Err}_6, \notag
				\end{align}
				where $\text{Err}_6\leq O\left(h_{i,z(i,w_1)}\prod_{\ell=m}^{k_1-2}h_{i,\ell}\right)$.
				Taking the limit as $i\rightarrow\infty$, the right-hand side of (\ref{riemman}) becomes
				\begin{align}\label{5.53}
					\int_0^{T_1}\int_{y_{m+1}}^{T_1}\dots\int_{y_{k_1-2}}^{T_1}&\frac{m}{2}\frac{(m+1)}{2}\exp\left(-\frac{m}{2}y_m\right)
					\exp\left(-\frac{m+1}{2}(y_{m+1}-y_m)\right)\\
					&\times \dots \times \frac{(k_1-2)}{2}\exp\left(-\frac{k_1-2}{2}(y_{k_1-2}-y_{k_1-3})\right)\nonumber\\
					&\times \left[1-\exp\left(-\frac{k_1-1}{2}(T_1-y_{k_1-2})\right)\right]dy_{k_1-2}\dots dy_m, \notag
				\end{align}
				as the right side of (\ref{riemman}) is the Riemann sum for the above definite integral. We observe that this integral also corresponds to $\bbP[Y_m+\dots +Y_{k_1-1}\leq T_1]$, where the random variables $Y_j$, $j=1,\dots,k_1-1$, are independent and exponentially  distributed with parameter $j/2$, respectively.   
				Therefore, 
				\begin{equation}
					\lim_{i\rightarrow\infty}\bbP\left[\sum_{j=m}^{k_1-1}U_{i,j}\leq T_1\right]=\bbP[Y_m+\dots +Y_{k_1-1}\leq T_1],
				\end{equation}
				and in an analogous way we also obtain
				\begin{equation}
					\lim_{i\rightarrow\infty}\bbP\left[\sum_{j=m}^{k_1}U_{i,j}\leq T_1\right]=\bbP[Y_m+\dots +Y_{k_1}\leq T_1].
			 	\end{equation}
				From this and (\ref{5.41}) we get
				\begin{align}\label{5.54}
					\lim_{i\rightarrow\infty}\bbP[\textswab{N}_{1,i}(T_{1})=k_{1}\mid \textswab{N}_{1,i}(0)=m]&=\lim_{i\rightarrow\infty}\bbP[\textswab{N}_m^{1,i}(T_{1})=k_{1}]\\
					&=\bbP\left[\sum_{j=m}^{k_1-1}Y_j\leq T_1\right]-\bbP\left[\sum_{j=m}^{k_1}Y_j\leq T_1\right]\nonumber\\
					&=\bbP\left[\sum_{j=m}^{k_1-1}Y_j\leq T_1<\sum_{j=m}^{k_1}Y_j\right]. \notag
				\end{align}
			 	In a Yule process with parameter $\lambda$ and starting with $m$ initial individuals, the interarrival or sojourn times are independent random variables exponentially distributed with parameter $\lambda j$, $j\geq m$, respectively. Thus, (\ref{5.54}) corresponds to
				\begin{align}\label{5.55}
			 		\bbP[N_{1/2}^m(T_1)=k_1\mid N_{1/2}^m(0)=m],
				\end{align}
			 	where $N_{1/2}^m(T)$ is the number of individuals in a Yule process with parameter $1/2$ and starting with $m$ initial individuals. 
			 	Following analogous steps from (\ref{5.41}) to (\ref{5.55}) we also find that for $\ell=2,\dots,b$,
			 	\begin{align}\label{5.56}
		     		\lim_{i\rightarrow\infty}\bbP[\textswab{N}_{1,i}(T_{\ell})=k_{\ell}\mid \textswab{N}_{1,i}(T_{\ell-1})=k_{\ell-1}]=\bbP[N_{1/2}^m(T_{\ell})=k_{\ell}\mid N_{1/2}^m(T_{\ell-1})=k_{\ell-1}].
			 	\end{align}
			 	Consequently, from  (\ref{5.39}), (\ref{5.40}), (\ref{5.54}), (\ref{5.55}), (\ref{5.56}), and since the Yule process is Markov, we conclude that
			 	\begin{align}
					\lim_{n\rightarrow\infty}\bbP[\textswab{N}_m^{1,i}(T_{\ell})=k_{\ell},\ell=1,\dots,b]&=\prod_{\ell=1}^b \bbP[N_{1/2}^m(T_{\ell})=k_{\ell}\mid N_{1/2}^m(T_{\ell-1})=k_{\ell-1}]\\
					&=\bbP[N_{1/2}^m(T_{\ell})=k_{\ell}, \ell=1,\dots,b]. \notag
				\end{align}

				Now, once proven the convergence to the Yule process of intensity $1/2$, we immediately have that
				\begin{align}
					\label{d1N}
					\left(\tilde d^1(v_i,(i+z(i,w_{\ell}))(m+1)),\ell=1,\dots,b\right)&\rightarrow
					\left(N_{1/2}^m(T_{\ell}),\ell=1,\dots,b\right),
				\end{align}
				and
				\begin{align}\label{d2N}
					\left(\tilde d^2(v_i,(i+z(i,w_{\ell}))(m+1)),\ell=1,\dots,b\right)&\rightarrow
					\left(N_{1/2}^m(T_{\ell}),\ell=1,\dots,b\right), 
				\end{align}	
				in distribution, as $i\rightarrow\infty$.
				
				Observe that at time $n(m+1)$, $n \ge i$, by (\ref{coupling}) the random variables $\tilde d^1(v_i,n(m+1))$,
				$\tilde d(v_i,n(m+1))$ and $\tilde d^2(v_i,n(m+1))$ are almost surely ordered, that is
				\begin{align}
					\label{d1d}
					\bbP[\tilde d^2(v_i,(i+z(i,w_{\ell}))(m+1)) &\leq \tilde d(v_i,(i+z(i,w_{\ell}))(m+1))\\
				 	&\leq \tilde d^1(v_i,(i+z(i,w_{\ell}))(m+1)), \ell=1,\dots,b]=1.\nonumber
				\end{align}
				This implies that for $k\geq m$,
				\begin{align}\label{distr}
					\bbP & (\tilde d^1(v_i,(i+z(i,w_{\ell}))(m+1))\leq k_{\ell},\ell=1,\dots,b)\\
					&\leq \bbP(\tilde d(v_i,(i+z(i,w_{\ell}))(m+1))\leq k_{\ell},\ell=1,\dots,b)\nonumber\\
					&\leq \bbP(\tilde d^2(v_i,(i+z(i,w_{\ell}))(m+1))\leq k_{\ell},\ell=1,\dots,b).\notag
				\end{align}
				Thus, from (\ref{d1N}), (\ref{d2N}) and (\ref{distr}) we obtain the convergence in distribution of the random vector
				\begin{equation} 
					\label{Margd}
					\left(\tilde d(v_i,(i+z(i,w_{\ell}))(m+1)),\ell=1,\dots,b\right)\rightarrow \left(N_{1/2}^m,
					(T_{\ell}),\ell=1\dots,b\right), 
				\end{equation}
				as $i\rightarrow\infty$. 
			\end{proof} 
		
			\begin{proof}[\textbf{Proof of Theorem \ref{Teo1a}}] 
				Using Lemma \ref{lema1}, Corollary \ref{cor2} and Lemma \ref{lema1conv} we obtain the convergence
				to the $b$-finite-dimensional distributions of a Yule process, for all $b\geq1$. 
				To obtain the exact formula we make use of the independence of the increments and of the distribution of
				the number of individuals in a Yule process with $k_{\ell}$ initial progenitors, $\ell=0,\dots, b$. Thus,
				\begin{align*}
					\bbP[ & N_{1/2}^m(\log(1+c(w_1)))=k_1,\dots,N_{1/2}^m(\log(1+c(w_b)))=k_b]\\
					& = \prod_{\ell=1}^b \bbP\left(N_{1/2}^{k_{\ell-1}}\left(\log\left(\frac{1+c(w_{\ell})}{1+c(w_{\ell-1})}\right)
					\right)=k_{\ell}\right). 
				\end{align*}
				Finally, we use equation (3.5) in \cite{Feller1}, Section XVII.3.
			\end{proof}

		\subsection{A lemma and the proof of Theorem \ref{Teo1} }\label{process2}

			In this section we make use of the planted model described in Section \ref{monotone}. Notice that
			the BA random graph model corresponds to the case in which all the random variables $M_j$, $j\geq1$,
			are concentrated on $m$, so that $\mathfrak{T}_n = n(m+1)$ almost surely.
 
			Formally, let  $({G}_m^t)_{t\geq 1}$ be the random graph process defining the BA model as in subsection \ref{BAl}.
			For each $1\leq j\leq i$ consider the birth processes in discrete time $\{b(v_j,n(m+1))\}_{n\geq i}$,
			with state space given by $\bbN^*$ and determined by the transition probabilities 
            \begin{align}
               	\label{b(n)}
               	\bbP[b(v_{j},(n+1)(m+1)=k+\ell\mid b(v_{j},n(m+1))=k]=
                \begin{cases}
					\frac{k}{n}, & \ell=1, \\
					0, & \text{otherwise},
				\end{cases}
            \end{align} 
			and initial condition $b(v_{j},i(m+1))=1$ almost surely.
			Recall that $i$ is taken large so that Lemma \ref{lema1} holds.
			\begin{lem}
				\label{S-Yule2}
				Let $z(i,w):\bbN^*\times\bbR^+\rightarrow\bbN$ be a function such that
				$c(w):=\lim_{i\rightarrow\infty}z(i,w)/i$ exists finite, where $c(w):\bbR^+\rightarrow \bbR^+$ is an increasing
				function in $w$, and let  $w_1<\dots<w_b$, $b\in\bbN^*$, be positive real numbers. For every $1 \leq j \leq i$ we have
				\begin{align}
					\label{Vertices}
					( b(v_j,(i+z(i,w_{\ell}))(m+1)),\ell=1,\dots,b)
					\rightarrow(N_{1}^1(\log(1+c(w_{\ell}))),\ell=1,\dots,b)
				\end{align}
				in distribution as $i\rightarrow\infty$, where $N_{1}^1(T)$ is the number of individuals of a Yule process at time $T$, with
				one initial individual and parameter $1$. 
			\end{lem}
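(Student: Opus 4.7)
The plan is to mimic the approach used in the proof of Lemma \ref{lema1conv}, embedding the discrete-time Markov chain $\{b(v_j,n(m+1))\}_{n\ge i}$ into continuous time via a suitable change of time variable, and then identifying the limiting infinitesimal generator with that of a Yule process of intensity $1$ starting from a single progenitor. The situation here is actually easier than in Lemma \ref{lema1conv}: the process $\{b(v_j,n(m+1))\}_{n\ge i}$ is already a Markov chain with clean transition probabilities (\ref{b(n)}), its initial condition $b(v_j,i(m+1))=1$ is deterministic, and no coupling with auxiliary bounding processes is needed.

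Concretely, I fix $1\le j\le i$ and define the continuous-time grid $T_i^0=0$, $T_i^x=\sum_{n=i}^{i+x-1}\frac{1}{n}$ for $x\ge 1$, so that the intervals $(T_i^{x-1},T_i^x]$ have length $h_n=1/n$ with $n=i+x-1$. I then introduce the pure-jump process $\{\textswab{M}^i(T)\}_{T\ge 0}$ obtained by setting $\textswab{M}^i(T)=b(v_j,(i+x)(m+1))$ for $T\in[T_i^x,T_i^{x+1})$, with $\textswab{M}^i(0)=1$ almost surely. By (\ref{b(n)}),
\begin{align*}
\bbP[\textswab{M}^i(T_i^x)=k+1\mid \textswab{M}^i(T_i^{x-1})=k]&=kh_n+o(h_n),\\
\bbP[\textswab{M}^i(T_i^x)=k\mid \textswab{M}^i(T_i^{x-1})=k]&=1-kh_n+o(h_n),
\end{align*}
and jumps of size larger than one cannot occur. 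These are precisely the infinitesimal transition probabilities (\ref{BirtPro}) of a Yule process with intensity $\lambda=1$.

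For the time scaling, I use the standard harmonic-series expansion $\sum_{n=1}^M\frac{1}{n}=\ln M+\gamma+O(1/M)$ to deduce
\[
T_i^{z(i,w_\ell)}=\ln\Big(1+\frac{z(i,w_\ell)}{i-1}\Big)+O(1/i)\longrightarrow \ln(1+c(w_\ell)),\qquad \ell=1,\dots,b,
\]
as $i\to\infty$. At the same time, the mesh $h_n$ becomes infinitesimal uniformly on any bounded time window, and the transition kernel above converges to the canonical Yule generator with $\lambda=1$. Since the finite-dimensional distributions of a continuous-time Markov chain are uniquely determined by its initial distribution and its infinitesimal transition probabilities, and since $\textswab{M}^i(0)=1=N_1^1(0)$ almost surely, the finite-dimensional distributions of $\textswab{M}^i$ evaluated at $T_i^{z(i,w_1)},\dots,T_i^{z(i,w_b)}$ converge to those of $N_1^1$ at $\ln(1+c(w_1)),\dots,\ln(1+c(w_b))$. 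Since $\textswab{M}^i(T_i^{z(i,w_\ell)})=b(v_j,(i+z(i,w_\ell))(m+1))$ by construction, this yields (\ref{Vertices}).

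The main obstacle is essentially bookkeeping: verifying that the time scaling $T_i^x$ indeed converges to $\ln(1+c(w))$ under the sole hypothesis $z(i,w)/i\to c(w)<\infty$, and justifying the passage from convergence of the transition probabilities on the shrinking grid to convergence in law of the underlying Markov processes. Both points are standard and mirror what was carried out in the proof of Lemma \ref{lema1conv}, so no new idea beyond this continuous-time embedding is required.
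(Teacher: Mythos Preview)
Your proposal is correct and follows essentially the same approach as the paper: the paper's own proof simply refers back to the construction of $\textswab{N}^{1,i}(T_i^x)$ in Lemma \ref{lema1conv}, replacing the initial condition by $1$ and the transition probabilities by (\ref{b(n)}), which is exactly the continuous-time embedding via the grid $T_i^x=\sum_{n=i}^{i+x-1}1/n$ that you have spelled out. The only difference is that you have written out the details that the paper leaves implicit by cross-reference.
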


			\begin{proof}		
				For each $1 \leq j \leq i$,  we prove convergence in distribution in the same way as we did in the proof
				of Lemma \ref{lema1conv} for $\textswab{N}^{1,i}(T_i^x)$, but now with $\textswab{N}^1(0)=1$,
				i.e.\ the process starts with only one individual, and transition probabilities given by (\ref{b(n)}).
				Therefore, as $i\rightarrow\infty$, the probabilities (\ref{b(n)}) become the infinitesimal transition probabilities
				of a Yule process with intensity $1$, starting with one individual.
				Since the process is Markov, the transition probabilities and the initial condition determine
				uniquely the finite-dimensional distributions.  
			\end{proof}
			
			\begin{obs}		
				To prove the first part of Theorem \ref{Teo1} we will make use of the result of Theorem \ref{Teo1a}. The idea is to  take $n:=n(i,w)$,
				a function of $i$ and a positive real number $w$, such that,  $i/n(i,w)\rightarrow 1/(1+c(w))$ as $i\rightarrow\infty$, with $c(w)$
				as in Theorem \ref{Teo1a}. Thus, $\lim_{w\rightarrow\infty}\lim_{i\rightarrow\infty}i/n(i,w)=0$. 
			\end{obs}

			\begin{proof}[\textbf{Proof of Theorem \ref{Teo1}}]
				We start proving (\ref{uniformchoice}). Consider the BA model at time $t=n(m+1)$, $n\geq i$,
				and the planted model of Section \ref{monotone}. Recall that in the planted model we have $i$ discrete-time birth
				processes $\{b(v_j,n(m+1))\}_{n\geq i}$, $j=1,\dots,i$, which are exchangeable.
				By Theorem \ref{plantedinduce},
				the event of choosing a vertex uniformly at random in the BA model is equivalent to
				that of selecting first uniformly at random one of the $i$ processes $\{b(v_j,n(m+1))\}_{n\geq i}$, $j=1,\dots,i$,
				and then choosing uniformly at random a vertex belonging to it. Therefore, the degree of $V_t$ can be studied through the analysis
				of the degree of a random vertex chosen with uniform probability  between the vertices in any of the $i$ processes
				$\{b(v_j,n(m+1))\}_{n\geq i}$, $j=1,\dots,i$. Let $V_{t}^j$ be a vertex chosen uniformly at random from the vertices
				in the $j$-th process $\{b(v_j,n(m+1))\}_{n\geq i}$, and let $\epsilon(i,n)$ be a function we will use to measure the error.
				Using the notation of Section \ref{samplingPlanted}, where $W$ denotes the index of the birth process chosen and
				$Y_j$ is a random variable taking values in $\{1,2,\dots,n-i+1\}$ denoting the number of vertices in $b(v_j,n(m+1))$,
				we have 
				\begin{align}\label{Vt1}				
					\bbP[d(V_t)=k] ={}& \sum_{j=1}^i\bbP[d(V_t^j)=k,V_t^j\neq v_j, W=j] \\
					& +\sum_{j=1}^i\bbP[d(V_t^j)=k,V_t^j= v_j, W=j]\nonumber\\
					={}& \bbP[d(V_t^1)=k\mid V_t^1\neq v_1, W=1]\sum_{j=1}^i\bbP[V_t^j\neq v_j, W=j]\nonumber\\
					& + \sum_{j=1}^i\bbP[d(V_t^j)=k\mid V_t^j= v_j, W=j]\bbP[V_t^j=v_j, W=j]\nonumber\\
					={}& \bbP[d(V_t^1)=k\mid V_t^1\neq v_1,W=1]+\epsilon(i,n).\notag
				\end{align} 
				The last two equalities are obtained by considering the following two observations.
				First, permuting the labels of the $i$ birth processes $\{b(v_j,n(m+1))\}_{n\geq i}$, $j=1,\dots,i$, will not change
				the distribution of the process of the new vertices and their degrees, thus for $j=1,\dots,i$,
				we can write $\bbP[d(V_t^j)=k\mid V_t^j\neq v_j, W=j]=\bbP[d(V_t^1)=k\mid V_t^1\neq v_1, W=1]$. Second, 
				\begin{align}
					\sum_{j=1}^i\bbP[d(V_t^j)=k\mid V_t^j= v_j, W=j]\bbP[V_t^j=v_j, W=j] & \leq \sum_{j=1}^i\bbP[V_t^j=v_j,W=j]\\
					&=\sum_{j=1}^i\sum_{\ell=1}^{n-i+1}\frac{1}{\ell}\frac{\ell}{n}\bbP(Y_j=\ell)
					=\frac{i}{n}, \notag
				\end{align}
				that is, $\epsilon(i,n) = O(i/n)$.
				
				Note that the degree of the planted vertices behaves differently as they have appeared in the very early history of the
				graph evolution. Also, in the limit, the number of planted vertices becomes negligible compared to the total size of the graph.
				   
				Now take $n(i,w)=i+z(i,w)$, where $z(i,w)$ is defined as in Lemma \ref{lema1conv}, Lemma \ref{S-Yule2} and Theorem \ref{Teo1a}.
				As $i\rightarrow\infty$,		
				\begin{itemize}
					\item by Lemma \ref{S-Yule2} we have that $b(v_1,n(m+1))$, converges
						in distribution to the size of a Yule process evaluated at time $T=\log(1+c(w))$, with intensity $1$ and
						starting with one initial individual;
					\item by Lemma \ref{lema1conv},  the degree of each vertex  belonging to
					$\{b(v_1,n(m+1))\}$, given that it is different to $v_1$,
						converges in distribution
						to the size of a Yule process with intensity $1/2$ and $m$ initial individuals. 
				\end{itemize}
				The above Yule processes describe an $m$-Yule model $\{Y_{1/2,1}^m(T)\}_{T\geq0}$ of parameters $\lambda=1/2$ and $\beta=1$.
				For $i\rightarrow\infty$, the degree of $V_{t}^1$ given that $V_{t}^1\neq v_1$, converges in distribution
				to the size of a genus chosen uniformly at random in the $m$-Yule model at time $T=\log(1+c(w))$, given in turn
				that such a random genus is different to the first genus appeared, $g_1$.
				Thus, if $\mathcal{N}_T^m$ denotes the size of a genus $G_T$ chosen uniformly at random at time $T$ in $\{Y_{1/2,1}^m(T)\}$, 
                \begin{equation}
					\label{Vt1Ng1}
					\lim_{i\rightarrow\infty}\bbP(d(V_{t}^1)=k\mid V_{t}^1\neq v_1, W=1)=\bbP(\mathcal{N}_{\log(1+c(w))}^m
					=k\mid G_{\log(1+c(w))}\neq g_1).
				\end{equation} 
				By (\ref{Vt1}) and (\ref{Vt1Ng1}),
				\begin{equation}
					\label{Vt1Ng11}
					\lim_{i\rightarrow\infty}\bbP(d(V_{t})=k)=\bbP(\mathcal{N}_{\log(1+c(w))}^m=k\mid G_{\log(1+c(w))}\neq g_1)+\varepsilon(w),	
				\end{equation}
				where $\varepsilon(w) = O\left(1/(1+c(w))\right)$. Since $c(w)$ is an increasing function and a Yule process is supercritical, then
               \begin{equation}
					\label{Ng1}
					\lim_{w\rightarrow\infty}\bbP(\mathcal{N}_{\log(1+c(w))}^m=k)
					=\lim_{w\rightarrow\infty}\bbP(\mathcal{N}_{\log(1+c(w))}^m=k\mid G_{\log(1+c(w))}\neq g_1).	
				\end{equation} 							
				Therefore, by (\ref{Vt1Ng11}) and (\ref{Ng1}), 
				\begin{align}
					\lim_{w\rightarrow\infty}\lim_{i\rightarrow\infty}\bbP(d(V_{t})=k)=
					\lim_{w\rightarrow\infty}\bbP(\mathcal{N}_{\log(1+c(w))}^m=k).
				\end{align}

				To prove (\ref{Inprobability}) note that
				\begin{align*}
					\bbE\Big(\frac{1}{n}\sum_{i=1}^n \mathbb{I}_{\{d(v_i,t)=k\}}\Big)=\frac{\bbE N_{k,t}}{n}=
					\frac{1}{n}\sum_{i=1}^n\bbP(d(v_i,t)=k)=\bbP(d(V_t)=k).
				\end{align*}
				Let $\bbF_t$ be the natural filtration generated by the process
				$\{N_{k,t}\}_{t\geq1}$ up to time $t$, and
				define $Z_s=\bbE(N_{k,t}\mid\bbF_s )$. Observe that $Z_s$ is a martingale as
				$\bbE[\bbE(N_{k,t}\mid\bbF_s )\mid \bbF_r]=\bbE(N_{k,t}\mid\bbF_r )$,
				for $r\leq s\leq t$.
				Considering that  at each time interval $(s-1,s]$ a new vertex $v_s$ appears and $m$ directed edges
				from it are attached to existing vertices,
				then $v_s$ is attached to at most $m$ different vertices, say $v^1,\dots,v^m$. This does not affect neither the degree of the other existing vertices $w\neq v^1,\dots,v^m$, nor the attachment probabilities related to them.
				Thus, it follows that $|Z_s-Z_{s-1}|\leq 2m$. Since $Z_t=N_{k,t}$ and $Z_0=\bbE N_{k,t}$,
				then by taking $x=C\sqrt{t\log t}$, with $C>m\sqrt{8}$
				and applying the Azuma--Hoeffding inequality (see Lemma 4.1.3 in \cite{Durrett2006}), we obtain
				\begin{align}\label{ByAzuma}
					\bbP\Big(\Big|\frac{N_{k,t}}{n}-\frac{\bbE N_{k,t}}{n}\Big|>C\sqrt{\frac{(m+1)\log(n(m+1))}{n}}\Big)\leq o\Big(\frac{1}{n}\Big).
				\end{align}
				Now observe that $N_{k,t}=0$ when $k\geq n(m+1)$, $n\geq 1$. Therefore, 
				\begin{align}
					\bbP\Big(\max_k & \Big|\frac{N_{k,t}}{n}-\frac{\bbE N_{k,t}}{n}\Big|>C\sqrt{\frac{(m+1)\log(n(m+1))}{n}}\Big)\\
					& = \bbP\Big(\max_{k<n(m+1)} \Big|\frac{N_{k,t}}{n}-\frac{\bbE N_{k,t}}{n}\Big|>C\sqrt{\frac{(m+1)\log(n(m+1))}{n}}\Big)\notag \\
					&\leq \sum_{k=1}^{n(m+1)-1}\bbP\Big(\max_{k<t} \Big|\frac{N_{k,t}}{n}
					-\frac{\bbE N_{k,t}}{n}\Big|>C\sqrt{\frac{(m+1)\log(n(m+1))}{n}}\Big). \notag
				\end{align}
				Concluding, by (\ref{ByAzuma}) we get the desired result.
			\end{proof}

		\subsection{Proof of Proposition \ref{Teo2}}

			\begin{proof}
  				Let us consider an $m$-Yule model $\{Y_{1/2,1}^m(T)\}_{T\geq0}$.
				It is known that by conditioning on the number of genera present
				at time $T$, the random times at which novel genera appear are distributed as
				the order statistics of i.i.d.\ random variables distributed with distribution function given by
				(see e.g.\ \cite{LaskiPolito14} or \cite{MR3562427} and the references therein)
				\begin{equation}
					\label{orderstat}
					\bbP(\mathcal{T}\leq\tau)=\frac{e^{\tau}-1}{e^{T}-1},\qquad 0\leq\tau\leq T.
				\end{equation}
				
				As above, let $\mathcal{N}_T^m$ denote the size of a genus chosen uniformly at random at time $T$. Then,
				for every $k\geq m$ and recalling the distribution of a Yule process starting with $m$ initial individuals,
				\begin{align}
					\label{first}
					\bbP(\mathcal{N}_T^m=k) & = \int_0^T\bbP(N_{1/2}^m(T)=k\mid
					N_{1/2}^m(\tau)=m)\bbP(\mathcal{T}\in \textup{d}\tau)\\
					&= \int_0^T \binom{k-1}{m-1} e^{-m \frac{T-\tau}{2}}(1-e^{\frac{T-\tau}{2}})^{k-m}
					\frac{e^{\tau}}{e^{T}-1}\textup{d}\tau\nonumber\\
					&= \frac{1}{1-e^{-T}}\int_0^T \binom{k-1}{m-1} e^{-y}e^{-m \frac{y}{2}}(1-e^{-\frac{y}{2}})^{k-m} \textup{d}y. \notag
				\end{align}
				By letting $z=1-e^{-\frac{y}{2}}$, we can write (\ref{first}) as
				\begin{align}
					\label{third}
					\bbP(\mathcal{N}_T^m=k)
					&= \frac{2}{1-e^{-T}}\int_0^ {1-e^{- \frac{T}{2}}}\binom{k-1}{m-1} z^{k-m}(1-z)^{m+1} \textup{d}z.
				\end{align} 					
				Our interest is in the asymptotic behaviour when $T\rightarrow\infty$. In this case \eqref{third} reduces to
				\begin{align}
					\lim_{T\rightarrow\infty} \bbP(\mathcal{N}_T^m=k) 
					&= 2\binom{k-1}{m-1}B(k-m+1,m+2) \\
					&= m(m+1)B(k,3), \notag
				\end{align}
				where $B(a,b)$ denotes the Beta function.
			\end{proof}
			
	\subsubsection*{Acknowledgments} 
	
		The authors would like to thank the anonymous Referee for the valuable suggestions which improved both the content and presentation of the paper.
	
		F.\ Polito and L.\ Sacerdote have been supported by the projects \emph{Memory in Evolving Graphs}
		(Compagnia di San Paolo/Universit\`a di Torino) and by INDAM (GNAMPA/GNCS).

\end{document}